\newtheorem{theorem}{Theorem}[section]
\newtheorem{definition}[theorem]{Definition}
\newtheorem{lemma}[theorem]{Lemma}
\newtheorem{corollary}[theorem]{Corollary}
\newcommand{\vertex}{\node[vertex]}
\tikzstyle{vertex}=[circle, draw, inner sep=0pt, minimum size=6pt]
\newcommand{\QEDmark}{\mbox{\textsc{qed}}}
\newcommand{\proofStarter}[1]{\textsc{#1} }
\begin{document}
\title{On zero-sum $\mathbb{Z}_{2j}^k$-magic graphs}
\author{J. P. Georges\\Trinity College\\Hartford, CT USA 06013\\
\texttt{john.georges@trincoll.edu}
\and
D. Mauro\\
Trinity College\\
Hartford, CT USA 06013\\
\texttt{david.mauro@trincoll.edu}
\and
K. Wash \\
Trinity College\\
Hartford, CT USA 06013\\
\texttt{kirsti.wash@trincoll.edu}
}
\date{\today}
\maketitle

\begin{abstract}
Let $G = (V,E)$ be a finite graph and let $(\mathbb{A},+)$ be  an abelian group with identity 0.    
Then  $G$ is \textit{$\mathbb{A}$-magic} if and only if there exists  a function $\phi$ from $E$ into $\mathbb{A} - \{0\}$  such that for some $c \in \mathbb{A}$, $\sum_{e \in E(v)} \phi(e) = c$ for every $v \in V$, where $E(v)$ is the set of edges incident to $v$.  Additionally, $G$ is \textit{zero-sum $\mathbb{A}$-magic} if and only if $\phi$ exists such that $c = 0$.  We consider  zero-sum $\mathbb{A}$-magic labelings of graphs,  with particular attention given to $\mathbb{A} = \mathbb{Z}_{2j}^k$. For $j \geq 1$, let $\zeta_{2j}(G)$ be the smallest positive integer $c$ such that $G$ is zero-sum $\mathbb{Z}_{2j}^c$-magic if $c$ exists; infinity otherwise. We establish upper bounds on $\zeta_{2j}(G)$ when $\zeta_{2j}(G)$ is finite, and  show that $\zeta_{2j}(G)$ is finite for all $r$-regular $G$, $r \geq 2$. 
Appealing to classical results on the factors of cubic graphs,  we prove  that $\zeta_4(G) \leq 2$ for a cubic graph $G$, with equality if and only if $G$ has no 1-factor. We discuss the problem of classifying cubic graphs according to the collection of finite abelian groups  for which they are zero-sum group-magic. 

\end{abstract}

\section{Introduction}
{\rm \ \ } Throughout this paper, graphs will be finite and loopless, but may have multiple edges.  The  vertex set and  edge set of  graph $G$ will be denoted $V(G)$ and  $E(G)$,  respectively.  An edge in $E(G)$ is a bridge if and only if its deletion results in a graph having precisely one more component than $G$ has; the set of bridges of $G$ shall be denoted $B(G)$.  For positive integer $k$, $G$ is    $k$-edge-connected if and only if   $G$ is connected and the deletion of any $k-1$ edges from $E(G)$ does not result in a disconnected graph. 

The set of all non-trivial abelian groups $\mathbb{A} = (A,+)$ will be denoted ${\cal A}$, and  the identity element of each group in  ${\cal A}$ will be denoted by 0. 

Let $G = \big(V(G),E(G)\big)$ be a graph and  let $\mathbb{A} = (A,+) \in {\cal A}$.  Then an {\it $\mathbb{A}$-labeling of $G$} is a function $\phi$ from $E(G)$ into $A - \{0\}$.  For fixed $e \in E(G)$,  $\phi(e)$ is called the {\it label of $e$ under $\phi$}, and  for fixed $v \in V(G)$,   the {\it weight of $v$ under $\phi$} is the sum of the labels of the edges incident to $v$. The graph $G$ is  {\it $\mathbb{A}$-magic} if and only if there exists  an $\mathbb{A}$-labeling $\phi$ of $G$ and an $a \in A$ such that the weight of every vertex in $V(G)$ under $\phi$ is $a$. In such a case, $\phi$ is called an {\it $\mathbb{A}$-magic labeling of $G$}.  Moreover, $G$ is  {\it zero-sum $\mathbb{A}$-magic} if and only if there is an $\mathbb{A}$-labeling $\phi$ of $G$  such that the weight of every vertex in $V(G)$ under $\phi$ is 0. In this case, $\phi $ is  called a {\it zero-sum $\mathbb{A}$-magic labeling of $G$}.  We observe that if $\mathbb{H}$ is a non-trivial subgroup of abelian group $\mathbb{A}$ such that $G$ is zero-sum $\mathbb{H}$-magic, then $G$ is zero-sum $\mathbb{A}$-magic.

Zero-sum group-magic graphs reside in the broader class of magic graphs whose genesis  is due to  Sedl\`a\v{c}ek \cite{sedlacek}. Since his work, variants of group-magic labelings have appeared, including     edge-magic, vertex-magic, total-magic, semi-magic, pseudomagic, and supermagic; see    \cite{gallian} and \cite{wallis}.  Recent works, such as  
\cite{Zk3reg}, \cite{Z2kmagic}, \cite{V4magic}, \cite{old15}, \cite{old16}, \cite{old17}, \cite{old18}, \cite{old22}, \cite{old23}, \cite{old26}, have focussed upon  both  $\mathbb{Z}$-magic and $\mathbb{Z}_j$-magic labelings, where $\mathbb{Z}$ denotes the group of integers under addition and $\mathbb{Z}_j$ denotes the group of integers under modulo $j$ addition. This has led  to  the invention of the integer-magic spectrum of a graph:
\vskip 5pt

\begin{definition} {\rm \ The }zero-sum integer-magic spectrum of graph $G$, {\rm denoted $zim(G)$}, is the set of positive integers  such that 

(a) $1 \in zim(G)$ if and only if $G$ is zero-sum $\mathbb{Z}$-magic, and 

(b)  for $j \geq 2$, $j \in zim(G)$ if and only if $G$ is zero-sum $\mathbb{Z}_j$-magic. 
\end{definition}
\vskip 5pt

\noindent The analysis found in   \cite{Zk3reg} results in the full characterization of  the zero-sum integer-magic spectra of cubic graphs.

\begin{theorem} \label{AAA}     \cite{Zk3reg}  Let $G$ be a cubic graph. If $G$ has a 1-factor, then $zim(G) = \mathbb{N} - \{2\}$.  If $G$ does not have  a 1-factor, then $zim(G) = \mathbb{N} - \{2,4\}$.
\end{theorem}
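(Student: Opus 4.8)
My plan is to handle separately the forbidden spectral values, the $1$‑factor case, and the no‑$1$‑factor case, reducing the last to a single bounded‑labelling lemma. First the forbidden values. Over $\mathbb{Z}_2$ every edge must be labelled $1$, so a cubic graph has every vertex weight $1+1+1=1\ne 0$; hence $2\notin zim(G)$ for every cubic $G$. Over $\mathbb{Z}_4$, in any zero‑sum magic labelling the three labels at a vertex lie in $\{1,2,3\}$ and have integer sum $4$ or $8$, which forces the multiset to be $\{1,1,2\}$ or $\{2,3,3\}$; in either case exactly one incident edge carries the label $2$, so the $2$‑labelled edges form a $1$‑factor. Thus if $G$ has no $1$‑factor then $4\notin zim(G)$, so $zim(G)\subseteq\mathbb{N}-\{2\}$ in general and $zim(G)\subseteq\mathbb{N}-\{2,4\}$ when there is no $1$‑factor. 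Conversely, if $G$ has a $1$‑factor $F$ then $G-F$ is a $2$‑factor, and labelling $F$ by $2$ and $G-F$ by $-1$ gives every weight $2+(-1)+(-1)=0$; since $2$ and $-1$ have nonzero residues over $\mathbb{Z}$ and over every $\mathbb{Z}_j$ with $j\ge 3$, we get $\mathbb{N}-\{2\}\subseteq zim(G)$, hence $zim(G)=\mathbb{N}-\{2\}$.

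Now assume $G$ has no $1$‑factor; by the above it suffices to put $1$, $3$, and every $j\ge 5$ into $zim(G)$. The value $3$ is immediate (label every edge $1$). For $1$ and all $j\ge 5$ I would prove the single statement that \emph{every cubic graph admits a zero‑sum $\mathbb{Z}$‑magic labelling with all labels in $\{\pm1,\pm2,\pm3,\pm4\}$}. That labelling at once shows $1\in zim(G)$, and its reduction modulo any $j\ge 5$ still has every label nonzero (since $j$ divides no nonzero integer of absolute value less than $j$), so $j\in zim(G)$; therefore $zim(G)=\{1,3\}\cup\{j:j\ge 5\}=\mathbb{N}-\{2,4\}$, as claimed. (For a $G$ with no $1$‑factor such a labelling is necessarily forced to use the value $\pm4$, for otherwise its reduction modulo $4$ would contradict the first paragraph --- which is why this route correctly does not recover $j=4$.)

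To build the bounded labelling I would appeal to classical factor theory. If $G$ is bridgeless, Petersen's theorem supplies a $1$‑factor and the labelling of the first paragraph is already $\{\pm1,\pm2\}$‑valued. In general, induct along the tree whose nodes are the components of $G$ obtained by deleting all bridges (some are single vertices, the rest $2$‑edge‑connected) and whose edges are the bridges of $G$. A leaf node $B$ is $2$‑edge‑connected with exactly one vertex $u$ of degree $2$ (where its bridge attaches); suppressing $u$ (its two neighbours are distinct since $B$ is $2$‑edge‑connected) yields a $2$‑edge‑connected cubic graph, and Petersen's theorem together with the classical fact that deleting one edge from a $2$‑edge‑connected cubic graph leaves a graph with a $1$‑factor show that $B-u$ has a $1$‑factor. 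Equivalently $B$ has a $2$‑factor whose complement is a matching of $B$ missing only $u$; labelling along this $2$‑factor/matching pair realizes, with labels in $\{\pm1,\dots,\pm4\}$, any prescribed weight $s\in\{\pm2,\pm4\}$ at $u$ and weight $0$ at every other vertex --- the identity $\sum_v w(v)=2\sum_e\phi(e)$ forcing $s$ to be even. Peeling leaves off one at a time fixes bridge labels, each of which imposes a boundary weight on the neighbouring node; at an interior node one realizes the outstanding boundary weight from those already fixed, again via a bounded‑labelling statement on near‑cubic $2$‑edge‑connected graphs read off from the same factor theorems, the only global constraint being a parity one that is automatic over $\mathbb{Z}$.

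The main obstacle is precisely this global assembly: one must control, node by node, which boundary weights are realizable (and with which parity) and then splice consistently over a bridge tree that can have nodes of degree $3$ and several bridges on one node, all the while holding every label in $\{\pm1,\dots,\pm4\}$. No single step is deep --- Petersen's theorem and the edge‑deletion theorem for $2$‑edge‑connected cubic graphs do the work on the individual nodes --- but the combinatorial bookkeeping in the gluing requires care. It is there, too, that the role of the excluded moduli is transparent: the parity identity forces every bridge to receive an even label, so, for instance, at a vertex all three of whose edges are bridges the branching condition over $\mathbb{Z}_4$ reads $2+2+2\equiv 0\pmod{4}$, which is false --- so exactly $j=2$ and $j=4$ resist, in agreement with the first paragraph.
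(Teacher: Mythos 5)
The paper does not prove this theorem; it imports it from \cite{Zk3reg}, so there is no internal proof to compare against. Judged on its own terms, your first paragraph is correct and complete: the $\mathbb{Z}_2$ exclusion, the observation that a zero-sum $\mathbb{Z}_4$-magic labeling forces the multiset of labels at each vertex to be $\{1,1,2\}$ or $\{2,3,3\}$ so that the $2$-labelled edges form a $1$-factor, and the $2$/$-1$ labeling of a $1$-factor and its complementary $2$-factor, which settles $zim(G)=\mathbb{N}-\{2\}$ when a $1$-factor exists. The reduction of the remaining case to the single lemma ``every cubic graph admits a zero-sum $\mathbb{Z}$-magic labeling with all labels in $\{\pm1,\dots,\pm4\}$'' is also a clean and legitimate strategy, since reducing such a labeling modulo any $j\ge 5$ keeps every label nonzero.

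The genuine gap is that this lemma -- which carries the entire burden of showing $1\in zim(G)$ and $j\in zim(G)$ for $j\ge 5$ when $G$ has no $1$-factor -- is only sketched, and the part you wave at is exactly the hard part. Your leaf analysis is fine (a $2$-factor of $s(H_i)$ through the smoothing edge, complementary matching, labels $a$ and $-2a$ realizing boundary weights $\pm2,\pm4$). But the interior nodes of the bridge tree are not handled. A component of $G-B(G)$ can be a single vertex meeting three bridges, a cycle $C_m$ every vertex of which meets a bridge, or a $2$-edge-connected graph with many degree-$2$ attachment vertices; in the cycle case the recursion $c_{i+1}=-s_i-c_i$ around an even cycle imposes an alternating-sum constraint on the bridge labels $s_i$ (only one of which is inherited from the parent), and in the multi-attachment case you must realize one imposed boundary weight while freely choosing the others, all while certifying that no label ever leaves $\{\pm1,\dots,\pm4\}$ and none becomes $0$. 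You explicitly defer this (``the combinatorial bookkeeping in the gluing requires care''), so the central claim is asserted rather than proved. Without it the argument establishes only $3\in zim(G)$ and the two exclusions; it does not deliver $1$ or any $j\ge5$ for bridge-heavy cubic graphs without a $1$-factor, which is precisely where the theorem has content.
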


Additionally, \cite{Zk3reg} continues a discussion of zero-sum $\mathbb{Z}_2^{k_1} \times \mathbb{Z}_4^{k_2}$-magicness, a discussion that was begun in \cite{V4magic} and followed by \cite{Z2kmagic}.  Results from the latter two works include the following.

\begin{theorem} \label{JJJ} \cite{Z2kmagic} If $G$ is a 2-edge-connected graph, then  $G$ is 
 zero-sum $\mathbb{Z}_2^k$-magic for some $k \in \{1,2,3\}$. 
\end{theorem}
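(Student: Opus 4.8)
The plan is to pass from magic labelings to nowhere-zero flows, where strong classical machinery is available. First I would record the elementary but crucial observation that, since every element $x$ of $\mathbb{Z}_2^k$ satisfies $-x = x$, orientations are irrelevant: a zero-sum $\mathbb{Z}_2^k$-magic labeling of $G$ is exactly a nowhere-zero $\mathbb{Z}_2^k$-flow of $G$. Equivalently --- and this is the form I would actually exploit --- writing $\phi(e) = (\phi_1(e),\dots,\phi_k(e))$ and setting $C_i = \{e : \phi_i(e) = 1\}$, the requirement ``weight $0$ at every vertex'' says precisely that each $C_i$ induces an even subgraph (every vertex has even degree in $C_i$), while ``$\phi(e) \neq 0$'' says precisely that $C_1 \cup \cdots \cup C_k = E(G)$. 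Thus $G$ is zero-sum $\mathbb{Z}_2^k$-magic if and only if $E(G)$ can be covered by $k$ even subgraphs of $G$.

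With this dictionary the theorem becomes a statement about covering $E(G)$ by few even subgraphs, and I would prove it by a three-way case split according to the coarsest structure present. (i) If every vertex of $G$ has even degree, then $E(G)$ is itself an even subgraph, so assigning the unique nonzero element of $\mathbb{Z}_2$ to every edge gives a zero-sum $\mathbb{Z}_2$-magic labeling and $k = 1$ suffices. (ii) If $G$ admits a nowhere-zero $4$-flow, then since the existence of a nowhere-zero $n$-flow depends only on $n$ and not on the chosen abelian group of order $n$ (Tutte), $G$ has a nowhere-zero $\mathbb{Z}_2^2$-flow and $k = 2$ suffices. (iii) In all remaining cases, $G$ is still bridgeless (being $2$-edge-connected), so Jaeger's $8$-flow theorem yields a nowhere-zero $8$-flow, hence a nowhere-zero $\mathbb{Z}_2^3$-flow, i.e.\ a cover of $E(G)$ by three even subgraphs, so $k = 3$ suffices. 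Since these cases exhaust all $2$-edge-connected graphs, the theorem follows, and in fact the argument pins down which value of $k$ is needed.

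The genuine content --- and the step I expect to be the main obstacle if one insists on a self-contained argument --- is case (iii): that every bridgeless graph has its edge set covered by three even subgraphs. This is essentially Jaeger's $8$-flow theorem, whose standard proof routes through the Nash-Williams/Tutte tree-packing theorem (pass to a suitable edge-multiplied graph, extract edge-disjoint spanning trees, and assemble their fundamental cycles into three even subgraphs); I would be content to cite it, as the remainder --- the labeling/flow/even-subgraph translation and cases (i) and (ii) --- is routine. I would also note that $2$-edge-connectedness is invoked only to guarantee bridgelessness, which is in any case necessary: no bridge lies on a cycle, hence on no even subgraph, so a graph with a bridge is zero-sum $\mathbb{Z}_2^k$-magic for no $k$. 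This makes the hypothesis of the theorem exactly what the proof requires.
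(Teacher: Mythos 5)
Your proof is correct, and it follows essentially the same route as the source: the paper itself only cites this result from Choi--Georges--Mauro, whose title (``Relating edge-coverings to the classification of $\mathbb{Z}_2^k$-magic graphs'') indicates exactly your dictionary between zero-sum $\mathbb{Z}_2^k$-magic labelings and covers of $E(G)$ by $k$ even subgraphs, with the bound $k\leq 3$ coming from the even-subgraph-cover form of Jaeger's $8$-flow theorem for bridgeless graphs. Your additional case split refining which $k$ suffices is harmless but not needed for the statement as given.
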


\begin{theorem} \label{BBB} \cite{V4magic} Let $G$ be a cubic graph. Then   $G$ is zero-sum $\mathbb{Z}_2^2$-magic if  and only if the chromatic index of $G$ is 3. 

\end{theorem}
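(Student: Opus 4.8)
The plan is to exploit the arithmetic of the Klein four-group $\mathbb{Z}_2^2$ directly. Write $N=\{a,b,c\}$ for the three non-zero elements of $\mathbb{Z}_2^2$. The first step is the elementary observation that three elements of $N$ (not necessarily distinct) sum to $0$ if and only if they are pairwise distinct. This is a three-case check: if all three agree, the sum equals that common element, which is non-zero; if exactly two agree, say to $x$, the sum is the third, distinct element, which is non-zero; and if all three are distinct they are $a,b,c$ in some order, and $a+b+c=0$. Since $G$ is cubic, for any $\mathbb{Z}_2^2$-labeling $\phi$ the weight of a vertex is $\phi(e_1)+\phi(e_2)+\phi(e_3)$, where $e_1,e_2,e_3$ are the edges incident to it; so the observation says precisely that $\phi$ is a zero-sum $\mathbb{Z}_2^2$-magic labeling of $G$ if and only if $\phi$, regarded as a $3$-coloring of $E(G)$, is a proper edge coloring.

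Given this, both implications are immediate. If $\chi'(G)=3$, fix a proper $3$-edge-coloring of $G$ and identify its three colors with $a$, $b$, $c$; the observation shows that the resulting labeling is zero-sum $\mathbb{Z}_2^2$-magic, so $G$ is zero-sum $\mathbb{Z}_2^2$-magic. Conversely, if $\phi$ is a zero-sum $\mathbb{Z}_2^2$-magic labeling of $G$, then by the observation $\phi$ is a proper $3$-edge-coloring of $G$, whence $\chi'(G)\le 3$; and since $G$ is cubic we have $\chi'(G)\ge\Delta(G)=3$, so $\chi'(G)=3$.

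I do not expect a genuine obstacle here: essentially all of the content is packed into the first-step observation about sums of three non-zero elements of $\mathbb{Z}_2^2$, and once that is in hand the theorem follows with no further work. The only points needing a word of care are the standing conventions (labels lie in $A-\{0\}$, so identifying colors with $\{a,b,c\}$ is legitimate) and the trivial remark that a cubic graph satisfies $\chi'(G)\ge 3$, which is what upgrades the inequality $\chi'(G)\le 3$ to equality.
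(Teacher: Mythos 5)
Your argument is correct: the key observation that three nonzero elements of $\mathbb{Z}_2^2$ sum to $0$ if and only if they are pairwise distinct immediately identifies zero-sum $\mathbb{Z}_2^2$-magic labelings of a cubic graph with proper $3$-edge-colorings, and the rest follows. Note that the paper does not prove this theorem itself but imports it from the cited reference; your proof is the standard one for this result, and it applies without change to the multigraph setting the paper works in.
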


\begin{theorem} \label{XXX} \cite{V4magic}
Let $G$ be a graph with a bridge. Then for all positive integers $k$, $G$ is not zero-sum $\mathbb{Z}_2^k$-magic.  
\end{theorem}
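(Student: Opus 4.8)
The plan is to exploit the fact that in $\mathbb{Z}_2^k$ every element $x$ satisfies $2x = 0$, so that any label counted an even number of times drops out of a sum computed in $\mathbb{Z}_2^k$. Suppose, for contradiction, that $\phi$ is a zero-sum $\mathbb{Z}_2^k$-magic labeling of $G$ and that $e = uv$ is a bridge of $G$. Since $e$ is a bridge, $G - e$ has exactly two components; let $C$ be the component of $G - e$ containing $u$, and note that $v \notin V(C)$.

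Next I would evaluate $\sum_{w \in V(C)} \big(\sum_{f \in E(w)} \phi(f)\big)$ in two ways. On one hand, because $\phi$ is zero-sum magic, each inner sum is $0$, so the total is $0$. On the other hand, reorganizing the double sum by edges: every edge of $G$ with both endpoints in $V(C)$ is an edge of $G - e$ (the only edge of $G$ not in $G - e$ is $e$, which has the endpoint $v \notin V(C)$), and it is counted exactly twice, contributing $2\phi(f) = 0$; the bridge $e$ is counted exactly once, at $u$, contributing $\phi(e)$; and no other edge of $G$ meets $V(C)$, since an edge of $G - e$ joining $V(C)$ to its complement would merge two components of $G - e$. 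Hence the total equals $\phi(e)$. Equating the two evaluations gives $\phi(e) = 0$, contradicting $\phi(e) \in \mathbb{Z}_2^k - \{0\}$. Since $k$ was arbitrary, $G$ is not zero-sum $\mathbb{Z}_2^k$-magic for any $k$.

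There is no genuine obstacle here; the only steps needing care are the structural claims about $C$ — that $v \notin V(C)$ and that $e$ is the unique edge of $G$ leaving $V(C)$ — both of which follow directly from the definition of a bridge. It is worth emphasizing that the argument is tight to the exponent-$2$ structure of $\mathbb{Z}_2^k$: over a general abelian group the same computation yields only $\phi(e) + 2\sum \phi(f) = 0$ summed over the interior edges of $C$, which need not force $\phi(e) = 0$, so the conclusion is genuinely special to $\mathbb{Z}_2^k$ (cf.\ Theorem~\ref{AAA}, where $\mathbb{Z}$ and $\mathbb{Z}_j$ with $j$ odd behave very differently).
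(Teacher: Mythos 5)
Your argument is correct: summing the (zero) vertex weights over one side of the bridge kills every internal edge (counted twice, and $2x=0$ in $\mathbb{Z}_2^k$) and leaves exactly $\phi(e)$, forcing the forbidden label $0$. The paper cites this result from \cite{V4magic} without reproducing a proof, but your double-counting is precisely the technique the paper itself uses to prove the more general Theorem~\ref{YYY}, so this is essentially the same approach.
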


\noindent Since these results pertain to direct products whose factors are  $\mathbb{Z}_{2j}$ for $j = 1$, it is  natural to inquire whether or not for any $j > 1$ and any graph $G,$ there exists  a positive integer $k$ such that $G$ is zero-sum $\mathbb{Z}_{2j}^k$-magic. For $j \geq 1$, let $\zeta_{2j}(G)$ be the smallest positive integer $c$ such that $G$ is zero-sum $\mathbb{Z}_{2j}^c$-magic if $c$ exists; infinity otherwise. We note that $\zeta_{2j}(G) \leq k_0$ if and only if $G$ is zero-sum  $\mathbb{Z}_{2j}^k$-magic  for $k \geq k_0$.

In this paper, we consider  zero-sum $\mathbb{A}$-magic labelings of graphs,  with particular attention given to $\mathbb{A} = \mathbb{Z}_{2j}^k$.   In Section 2, we give definitions and foundational theorems.  In Section 3,   we give necessary and sufficient conditions for the finiteness of  $\zeta_{2j}(G)$ where $j \geq 2$. For finite $\zeta_{2j}(G)$, we  show $\zeta_{2j}(G) \leq 6$ for even $j$, and otherwise, $\zeta_{2j}(G) \leq 3 + \vert B(G) \vert$ for general $j$. We also show that $\zeta_{2j}(G)$ is finite for all $r$-regular $G$, $r \geq 2$. 
In Section 4, we extend the results on cubic graphs appearing in \cite{Zk3reg} and \cite{Z2kmagic}, establishing  that $\zeta_4(G) \leq 2$, with equality if and only $G$ has no 1-factor. And in Section 5, we discuss the collection of finite  non-trivial abelian groups $\mathbb{A} $ for which a given cubic graph $G$ is zero-sum $\mathbb{A}$-magic. Since a graph $G$ is zero-sum $\mathbb{A}$-magic if and only if each of its components is zero-sum $\mathbb{A}$-magic, we may base various analyses on connected $G$ with no loss of generality.


\section{Definitions and Preliminary Results}

{\rm \ \ } Let $G$ be a 2-edge-connected graph with $2 \leq \delta(G) < \Delta(G) = 3$   and  vertex $v$ of degree 2. Then the graph that results by {\it smoothing $v$}, denoted $s_G(v)$, is the graph that is produced by  replacing $v$ and its two incident edges with an edge between the neighbors of $v$.  The graph that results by iteratively smoothing each vertex of degree 2 will be denoted $s(G)$. We note that $s(G)$ is a 2-edge-connected cubic graph.

Let $m$ denote a positive integer and let $G$ be a graph. Then  the  {\it m-subdivision of  $G$} is the graph  that results by inserting $m$ distinct vertices along each edge in $E(G)$.  

The {\it martini graph} shall refer to  the graph given in Figure 1. In the sequel, we will have occasion to identify the vertices of degree 1 of  $m$ vertex-disjoint copies of the martini graph with $m$ distinct  vertices of a given graph $G$. In Figure 2, we illustrate the graph $G_0$ that results when $m$ is 3 and $G$ is $K_3$.

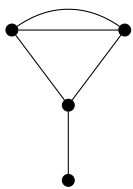
\begin{figure}[h]
\begin{center}
\begin{tikzpicture}[]
\tikzstyle{vertex}=[circle, draw, inner sep=0pt, minimum size=6pt]
\tikzset{vertexStyle/.append style={rectangle}}
	\vertex (1) at (0.25,0) [fill, scale=.75] {};
	\vertex (2) at (1.75,0) [fill, scale=.75] {};
	\vertex (3) at (1,-1) [fill, scale=.75]{};
	\vertex (4) at (1, -2) [fill, scale=.75]{};
	\path
		(1) edge (2)
		(1) edge[bend left=35] (2)
		(1) edge (3)
		(2) edge (3)
		(3) edge (4)

	;
\end{tikzpicture}
\end{center}
\caption{The martini graph.}
\end{figure}

\begin{figure}[h]
\begin{center}
\begin{tikzpicture}[]
\tikzstyle{vertex}=[circle, draw, inner sep=0pt, minimum size=6pt]
\tikzset{vertexStyle/.append style={rectangle}}
	\vertex (1) at (0.75,0.75) [fill, scale=.75] {};
	\vertex (2) at (2.25,0.75) [fill, scale=.75] {};
	\vertex (3) at (1.5,1.5) [fill, scale=.75]{};
	\vertex (4) at (1.5, 2.5) [fill, scale=.75]{};
	\vertex (5) at (2.25, 3.5) [fill, scale=.75]{};
	\vertex (6) at (.75, 3.5) [fill, scale=.75]{};
	\vertex (7) at (-.25, .75) [fill, scale=.75]{};
	\vertex (8) at (-1.25, 1.5) [fill, scale=.75]{};
	\vertex (9) at (-1.25, 0) [fill, scale=.75]{};
	\vertex (10) at (3.25, .75) [fill, scale=.75]{};
	\vertex (11) at (4.25, 1.5) [fill, scale=.75]{};
	\vertex (12) at (4.25, 0) [fill, scale=.75]{};

	\path
		(1) edge (2)
		(2) edge (3)
		(1) edge (3)
		(3) edge (4)
		(4) edge (5)
		(4) edge (6)
		(5) edge (6)
		(5) edge[bend right=35] (6)
		(1) edge (7)
		(7) edge (8)
		(7) edge (9)
		(8) edge (9)
		(8) edge[bend right=35] (9)
		(2) edge (10)
		(10) edge (11)
		(10) edge (12)
		(11) edge (12)
		(11) edge[bend left=35] (12)

	;
\end{tikzpicture}
\end{center}
\caption{ The graph $G_0$ }
\end{figure}
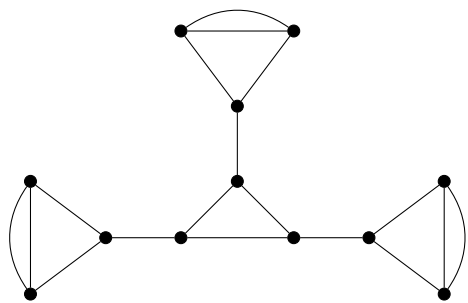

Let $G$ be a  graph with non-empty bridge set $B(G)$. For each edge  $e$ in  $B(G)$, there exist distinct components $H_i$ and $H_j$ of $G - B(G)$ such that $e$ is incident to some vertex in $V(H_i)$ and some vertex in $V(H_j)$. In such a case, we will say that {\it $e$ is incident to  $H_i$ and $H_j$}. 

Let $G$ be a connected graph with $\vert B(G) \vert = b \geq 0$,  and let $H_0, H_1, ..., H_b$ be the components of $G - B(G)$.   Then  $T_G$ shall denote the simple graph with vertex set $V(T_G) = \{h_0, h_1, h_2, ..., h_b\}$ and edge set  $E(T_G)= \{h_ih_j \big\vert $ some edge $e \in B(G)$ is incident to $H_i$ and $H_j\}$. We observe that $T_G$ is a tree, and that  $G$ has no bridges if and only if  $T_G$ is isomorphic to $K_1$.
Furthermore, $G - B(G)$  is a graph with $b + 1$ components $H_0, H_1, H_2,  ..., H_b$ such that each component is either
\vskip 5pt
\indent Type I:  isomorphic to $K_1$, or
\vskip 5pt
\indent Type II:  isomorphic to the $m$-cycle $C_m$ for some $m \geq 2$, or
\vskip 5pt
\indent Type III: isomorphic to some 2-edge-connected graph $H$ with $\Delta(H) = 3$.
\vskip 5pt

\noindent We note that if $H_i$ is of Type I or Type II, then $b > 0$ and $h_i$ is an interior vertex of $T_G$.  Otherwise, if $H_i$ is of Type III, then  $h_i$ is a leaf of $T_G$ if and only if either $b = 0$ or $H_i$ has precisely one vertex of degree 2. Moreover, if $b > 0$ and  $H_i$ is of Type III, then $s(H_i)$  is a 2-edge-connected cubic graph.  To illustrate, we observe that for graph $G_0$ of Figure 2, $T_{G_0}$ is isomorphic to $K_{1,3}$ and $G_0 - B(G_0)$ has one component of Type II and three components of Type III.

It is clear that if $G_1$ and $G_2$ are two connected graphs  such that $T_{G_1}$ is isomorphic to $T_{G_2}$, then $G_1$ and $G_2$ do not necessaily share zero-sum labelability.  For example, consider the graphs $G_1$ and $G_2$ depicted in Figure 3 and Figure 4. One can verify that the edge labels assigned in Figure 3 constitute a zero-sum $\mathbb{Z}_4$-magic labeling of $G_1$, and that $T_{G_1}$ and $T_{G_2}$ are isomorphic to $K_{1,3}$. However, no zero-sum $\mathbb{Z}_4$-magic labeling exists for $G_2$ since Theorem  \ref{YYY} (at the end of this section) guarantees that every zero-sum $\mathbb{Z}_4$-magic labeling of a cubic graph will assign 2 to each bridge.

\begin{figure}[h!]
\begin{center}
\begin{tikzpicture}[]
\tikzstyle{vertex}=[circle, draw, inner sep=0pt, minimum size=6pt]
\tikzset{vertexStyle/.append style={rectangle}}
	\vertex (1) at (-.5,0) [fill, scale=.75]{};
	\vertex (2) at (1,0) [fill, scale=.75] {};
	\vertex (3) at (-.5,1.5) [fill, scale=.75] {}; 
	\vertex (4) at (1,1.5) [fill, scale=.75] {}; 
	\vertex (5) at (1.75,.75) [fill, scale=.75] {}; 
	\vertex (6) at (2.75, .75)[fill, scale=.75]{};
	\vertex (7) at (3.75, .75)[fill, scale=.75]{};
	\vertex (8) at (4.75, .75)[fill, scale=.75]{};
	\vertex (9) at (5.5, 0)[fill, scale=.75]{};
	\vertex (10) at (7,0)[fill, scale=.75]{};
	\vertex (11) at (5.5, 1.5)[fill, scale=.75]{};
	\vertex(12) at (7, 1.5)[fill, scale=.75]{};
	\vertex(13) at (3.25,1.5)[fill, scale=.75]{};
	\vertex(14) at (3.25, 2.25)[fill, scale=.75]{};
        \vertex(15) at (2.5, 3)[fill, scale=.75]{};
	\vertex(16) at (2.5, 4.5)[fill, scale=.75]{};
	\vertex(17) at (4, 3)[fill, scale=.75]{};
	\vertex(18) at (4, 4.5)[fill, scale=.75]{};
	\draw (1) to node[below] {1} (2);
	\draw (1) to node[left] {1} (3);
	\draw (3) to node[above] {1} (4);
	\draw (1) to node[near start, above] {2} (4);
	\draw (2) to node[near start, above] {2} (3);
	\draw (4) to node[above] {1} (5);
	\draw (2) to node[below] {1} (5);
	\draw (5) to node[below] {2} (6);
	\draw (6) to node[below] {1} (7);
	\draw (7) to node[below] {2} (8);
	\draw (8) to node[below] {1} (9);
	\draw (9) to node[below] {1} (10);
	\draw (10) to node[near start, above] {2} (11);
	\draw (10) to node[right] {1} (12);
	\draw (11) to node[above] {1} (12);
	\draw (9) to node[near start, above] {2} (12);
	\draw (8) to node[above] {1} (11);
	\draw (6) to node[left] {1} (13);
	\draw (7) to node[right] {1} (13);
	\draw (13) to node[right] {2} (14);
	\draw (14) to node[left] {1} (15);
	\draw (15) to node[left] {1} (16);
	\draw (14) to node[right] {1} (17);
	\draw (17) to node[right] {1} (18);
	\draw (16) to node[above] {1} (18);
	\draw (17) to node[near start, above] {2} (16);
	\draw (18) to node[near end, above] {2} (15);

\end{tikzpicture}
\end{center}
\caption{The graph $G_1$ with a zero-sum $\mathbb{Z}_4$-magic labeling.}
\label{fig:tree_ex1}
\end{figure}
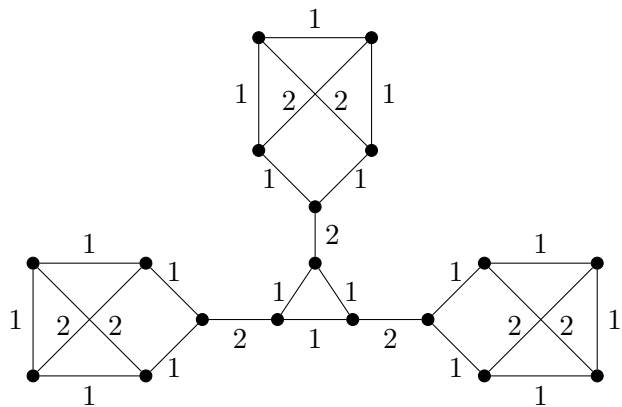

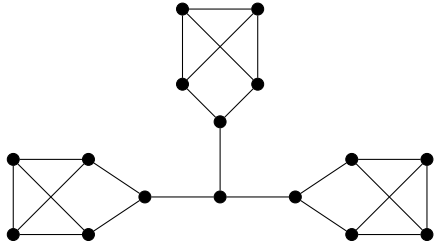
\begin{figure}[h!]
\begin{center}
\begin{tikzpicture}[]
\tikzstyle{vertex}=[circle, draw, inner sep=0pt, minimum size=6pt]
\tikzset{vertexStyle/.append style={rectangle}}
	\vertex (1) at (0,0) [fill, scale=.75] {};
	\vertex (2) at (1,0) [fill, scale=.75] {};
	\vertex (3) at (0,1) [fill, scale=.75] {}; 
	\vertex (4) at (1,1) [fill, scale=.75] {}; 
	\vertex (5) at (1.75,.5) [fill, scale=.75] {}; 
	\vertex (6) at (2.75, .5)[fill, scale=.75]{};
	\vertex (7) at (3.75, .5)[fill, scale=.75]{};
	\vertex (8) at (4.5, 0)[fill, scale=.75]{};
	\vertex (9) at (5.5, 0)[fill, scale=.75]{};
	\vertex (10) at (4.5,1)[fill, scale=.75]{};
	\vertex (11) at (5.5, 1)[fill, scale=.75]{};
	\vertex(12) at (2.75, 1.5)[fill, scale=.75]{};
	\vertex(13) at (2.25,2)[fill, scale=.75]{};
	\vertex(14) at (2.25, 3)[fill, scale=.75]{};
	\vertex(15) at (3.25, 2)[fill, scale=.75]{};
	\vertex(16) at (3.25, 3)[fill, scale=.75]{};
	\path
		(1) edge (2)
		(1) edge (3)
		(1) edge (4)
		(2) edge (3)
		(3) edge (4)
		(2) edge (5)
		(4) edge (5)
		(5) edge (6)
		(6) edge (7)
		(7) edge (8)
		(8) edge (9)
		(7) edge (10)
		(10) edge (11)
		(9) edge (11)
		(11) edge (8) 
		(9) edge (10)
		(6) edge (12)
		(12) edge (13)
		(14) edge (13)
		(12) edge (15)
		(16) edge (15)
		(14) edge (16)
		(13) edge (16)
		(14) edge (15)

	;
\end{tikzpicture}
\end{center}
\caption{The graph $G_2$ with no zero-sum $\mathbb{Z}_4$-magic  labeling.}
\label{fig:tree_ex2}
\end{figure}

There is a large body of work pertaining to factors of graphs, some of which address 1-factors of  cubic graphs. Results of use to this paper include the following.

\begin{theorem} \label{PQR} \cite{Petcube}   Every 2-edge-connected, cubic graph contains a $1$-factor. 
\end{theorem}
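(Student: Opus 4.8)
The plan is to derive this from Tutte's $1$-factor theorem: a graph $H$ has a $1$-factor if and only if $o(H-S)\le |S|$ for every $S\subseteq V(H)$, where $o(H-S)$ denotes the number of components of $H-S$ having an odd number of vertices. So I would fix a $2$-edge-connected cubic graph $G$ together with an arbitrary subset $S\subseteq V(G)$, and aim to verify $o(G-S)\le |S|$.

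First I would obtain a lower bound on the number of edges leaving each odd component of $G-S$. If $C$ is such a component, then summing over $V(C)$ the degrees of its vertices in $G$ gives $3\,|V(C)|$, an odd number; since twice the number of edges lying inside $C$ is even, the number of edges joining $C$ to $S$ must be odd, and in particular nonzero. As $G$ is $2$-edge-connected it contains no bridge, so this number cannot be exactly $1$; being odd and at least $2$, it is therefore at least $3$. Summing over all odd components of $G-S$, at least $3\,o(G-S)$ edges run between $S$ and the union of those components.

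Next I would bound the same quantity from above. Each of the $|S|$ vertices of $S$ has degree $3$ in $G$, so at most $3|S|$ edges are incident to $S$ altogether, and in particular at most $3|S|$ edges join $S$ to $V(G)\setminus S$. Comparing the two bounds gives $3\,o(G-S)\le 3|S|$, hence $o(G-S)\le |S|$, and Tutte's theorem then supplies a $1$-factor of $G$. (One could instead route the argument through Petersen's $2$-factorization of bridgeless cubic graphs, but the Tutte-theorem count is the most economical.)

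The single step that carries all the weight is the upgrade from ``an odd number of crossing edges'' to ``at least three crossing edges,'' since this is exactly where $2$-edge-connectedness is used, and the hypothesis genuinely cannot be dropped: attaching three copies of a suitable odd gadget (one vertex of degree $2$, all others of degree $3$) to a common new vertex by bridges produces a bridged cubic graph with no $1$-factor. Apart from invoking Tutte's theorem, which I would take as a black box, the remaining arithmetic is routine, so I anticipate no real difficulty.
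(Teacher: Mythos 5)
The paper states this as a cited classical result of Petersen and supplies no proof of its own, so there is no in-paper argument to compare against; your proposal stands on its own. It is correct and complete. The key parity count is right: for an odd component $C$ of $G-S$, the degree sum $3|V(C)|$ is odd while twice the number of internal edges is even, so the number of edges from $C$ to $S$ is odd; bridgelessness rules out the value $1$, so it is at least $3$; and double counting against the at most $3|S|$ edges incident to $S$ gives $o(G-S)\le |S|$, whence Tutte's theorem applies. The case $S=\emptyset$ is handled automatically, since the parity argument forces at least one crossing edge whenever an odd component exists, so no odd components can occur there. Your closing remark on sharpness is also consistent with the paper: the graph $G_0$ of Figure~2 (three martini gadgets on a triangle) is exactly the kind of bridged cubic graph with no $1$-factor you describe. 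The only caveat is historical rather than mathematical: Petersen's 1891 proof could not have used Tutte's 1947 theorem and proceeded by a different decomposition argument, but as a verification of the statement your Tutte-based count is the standard and most economical route.
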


\begin{theorem} \label{STU} \cite{Petcube} Every connected cubic graph with at most two bridges contains a $1$-factor. 
\end{theorem}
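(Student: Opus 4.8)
The plan is to derive the statement from Tutte's $1$-factor theorem, which asserts that a graph $H$ has a perfect matching if and only if $o(H-S) \le \vert S\vert$ for every $S \subseteq V(H)$, where $o(H-S)$ denotes the number of components of $H-S$ of odd order. So I would fix a connected cubic graph $G$ with $\vert B(G)\vert \le 2$, fix an arbitrary $S \subseteq V(G)$, and verify Tutte's inequality. Two preliminary facts do the supporting work: first, the handshake identity gives $3\vert V(G)\vert = 2\vert E(G)\vert$, so $\vert V(G)\vert$ is even; second, since the orders of the components of $G-S$ sum to $\vert V(G)\vert - \vert S\vert$, we get the parity relation $o(G-S) \equiv \vert V(G)\vert - \vert S\vert \equiv \vert S\vert \pmod 2$.

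Next, let $C_1,\dots,C_q$ be the components of $G-S$ of odd order and, for each $i$, let $m_i$ be the number of edges of $G$ joining $C_i$ to $S$; note that $C_i$ has no edges to any other component of $G-S$. Summing degrees over $V(C_i)$ yields $3\vert V(C_i)\vert = 2\,e(C_i) + m_i$, where $e(C_i)$ counts the edges inside $C_i$, so $m_i$ is odd because $\vert V(C_i)\vert$ is odd; in particular $m_i \ge 1$. If $m_i = 1$, then the unique edge joining $C_i$ to $S$ is a bridge of $G$, since deleting it separates the connected subgraph induced by $V(C_i)$ from the rest of $G$; distinct $C_i$ yield distinct such bridges, so at most $\vert B(G)\vert \le 2$ of the $m_i$ equal $1$, and every other $m_i$ is at least $3$.

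Counting the edges between $S$ and $C_1\cup\dots\cup C_q$ from the $S$-side gives $\sum_{i=1}^q m_i \le \sum_{v\in S}\deg_G(v) = 3\vert S\vert$, while the previous paragraph gives $\sum_{i=1}^q m_i \ge 2\cdot 1 + (q-2)\cdot 3 = 3q-4$. Hence $3q - 4 \le 3\vert S\vert$, so $q \le \vert S\vert + \tfrac43$ and therefore $q \le \vert S\vert + 1$. The parity relation $q \equiv \vert S\vert \pmod 2$ rules out $q = \vert S\vert + 1$, leaving $o(G-S) = q \le \vert S\vert$. Since $S$ was arbitrary, Tutte's criterion holds and $G$ has a $1$-factor.

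The step I expect to be most delicate, and would write out most carefully, is the interaction in the last paragraph between the crude edge count and the parity congruence: the inequality $3q - 4 \le 3\vert S\vert$ on its own yields only $q \le \vert S\vert + 1$, and it is precisely the automatic congruence $o(G-S)\equiv \vert V(G)\vert-\vert S\vert$ (available because a cubic graph has an even number of vertices) that sharpens this to the Tutte inequality. I also note that a direct reduction to Theorem~\ref{PQR} is awkward here: deleting a bridge of $G$ creates vertices of degree $2$, and any cubic gadget reattached at such a single vertex reintroduces a bridge, so the Tutte argument is the cleaner route.
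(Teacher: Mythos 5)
Your argument is correct. The paper itself gives no proof of this statement --- it is quoted directly from Petersen's 1891 paper --- so there is no internal argument to compare against; what you have written is the standard modern deduction from Tutte's $1$-factor theorem. Each step checks out: the degree sum $3\vert V(C_i)\vert = 2e(C_i) + m_i$ over an odd component forces $m_i$ odd; $m_i = 1$ produces a bridge of $G$ with an endpoint in $C_i$, and these bridges are distinct for distinct components, so at most two indices have $m_i = 1$ and the rest have $m_i \ge 3$; the two-sided count $3q - 4 \le \sum_i m_i \le 3\vert S\vert$ gives $q \le \vert S\vert + 1$; and the congruence $o(G-S) \equiv \vert V(G)\vert - \vert S\vert \equiv \vert S\vert \pmod 2$ (valid because a cubic graph has even order) eliminates $q = \vert S\vert + 1$. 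The lower bound $3q-4$ is trivially valid when $q \le 1$, so no case analysis is missing, and since the paper permits multiple edges it is worth noting (as your counts implicitly do) that Tutte's criterion and all the degree computations are insensitive to edge multiplicities. The final inequality $q \le \vert S\vert$ for arbitrary $S$ then yields the $1$-factor.
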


\begin{theorem} \label{MLK} \cite{West} Let $G$ be a 2-edge-connected cubic graph and let $e$ be a fixed edge in $E(G)$. Then 

(a) there exists a 1-factor of $G$ that contains $e$, and 

(b) there exists a 2-factor of $G$ that contains $e$.
\end{theorem}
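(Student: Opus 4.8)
The plan is to derive both parts from Tutte's $1$-factor theorem, using the standard fact that in a cubic graph a spanning subgraph is a $1$-factor exactly when its edge-complement is a $2$-factor. Consequently a $2$-factor of $G$ containing $e$ is the same object as a $1$-factor of $G$ avoiding $e$, so part (a) asks for a $1$-factor through $e=uv$ and part (b) for one missing it. For (a) it will be enough to exhibit a perfect matching of the graph $G'=G-u-v$ obtained by deleting $u$, $v$ and every edge incident to them, since adjoining $e$ to such a matching yields a $1$-factor of $G$. For (b) it will be enough to exhibit a perfect matching of $G-e$ (delete only the edge $e$); its edge-complement in $G$ is then a $2$-factor through $e$. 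Both auxiliary graphs have even order, as $G$ does.

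For (a), I would suppose $G'$ has no perfect matching, so by Tutte's theorem some $S\subseteq V(G')$ satisfies $o(G'-S)>|S|$, where $o(\cdot)$ counts odd components; since $o(G'-S)\equiv|V(G')|-|S|\equiv|S|\pmod 2$, in fact $o(G'-S)\ge|S|+2$. Let $D_1,\ldots,D_k$, $k\ge|S|+2$, be the odd components. Inside $G$ each $D_i$ sends all of its boundary edges into $S\cup\{u,v\}$, and counting degrees (every vertex of $G$ has degree $3$) the size of that boundary is $3|D_i|$ minus twice the number of edges of $G$ inside $D_i$, hence odd since $|D_i|$ is odd; as $G$ is $2$-edge-connected it is also at least $2$, so at least $3$. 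Summing over $i$ gives at least $3k$ edges from $\bigcup_i D_i$ into $S\cup\{u,v\}$, whereas the number of such edges is at most $3|S|+(\deg_G u-1)+(\deg_G v-1)=3|S|+4$ (the edge $e$ joins $u$ to $v$ and so contributes nothing toward $\bigcup_i D_i$). Thus $3k\le 3|S|+4$, forcing $k\le|S|+1$, a contradiction.

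For (b), I would run the same argument on $G-e$. The one new feature is an odd component $D_i$ of $(G-e)-S$ that contains exactly one endpoint of $e$: there the degree-sum parity flips, so the number of edges joining $D_i$ to $S$ is even rather than odd, and $2$-edge-connectivity of $G$ (which restores $e$ to the boundary of $D_i$) only forces this number to be at least $2$. At most two components are of this type, so the total number of edges from the odd components to $S$ is at least $3(k-2)+2\cdot 2=3k-2$, while it is at most $\sum_{w\in S}\deg_{G-e}(w)\le 3|S|$. Hence $3k-2\le 3|S|$, giving $k\le|S|$, which again contradicts $k\ge|S|+2$; so $G-e$ has a perfect matching and (b) follows.

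I expect the parity bookkeeping in part (b) to be the main obstacle: one must track exactly which odd components acquire the edge $e$ on their boundary (and so forfeit the extra $+1$ coming from $2$-edge-connectivity), and one must check the degenerate configurations the argument glosses over --- parallel edges at $u$ or $v$ (including a parallel copy of $e$), a common neighbour of $u$ and $v$, and singleton odd components $D_i$. None of these should disturb the inequalities, but they deserve explicit mention. It is also worth recording that part (a) is the classical strengthening of Petersen's theorem stating that every edge of a bridgeless cubic graph lies in a perfect matching, and part (b) its companion statement that every edge lies in some $2$-factor; the two are ``dual'' under edge-complementation, though neither formally implies the other.
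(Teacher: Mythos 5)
Your proof is correct. The paper itself offers no argument for this statement --- it is quoted as a classical result with a citation to West's textbook (ultimately going back to Petersen-era factor theory and Sch\"onberger's strengthening that every edge of a bridgeless cubic graph lies in a perfect matching) --- so there is no in-paper proof to compare against. Your derivation from Tutte's $1$-factor theorem is the standard self-contained route, and the two counting arguments check out: in (a), each odd component of $G'-S$ sends an odd number, hence at least $3$, of edges into $S\cup\{u,v\}$, against a capacity of $3|S|+4$, forcing $k\le|S|+1$; in (b), at most two odd components of $(G-e)-S$ lose the parity bonus and still contribute at least $2$ each by $2$-edge-connectivity, giving $3k-2\le 3|S|$ and $k\le|S|$. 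Both contradict $k\ge|S|+2$, which follows correctly from the parity of $o(\cdot)$ and the even order of the auxiliary graphs. Your reduction of (b) to a $1$-factor of $G-e$ via edge-complementation is also exactly right, and your closing caveats (parallel edges, common neighbours, singleton components) are genuinely harmless for the reason you give: they only decrease the left-hand counts or are absorbed by the degree bounds. Since the paper's setting allows multigraphs, it is worth your explicit remark that a parallel copy of $e$ only tightens the $3|S|+4$ bound in (a). The one presentational nit: in (b) the bound $3(k-2)+4$ presumes $k\ge2$, which you should note follows from $k\ge|S|+2\ge2$.
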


\begin{theorem}\label{ABC}\cite{regfactors}\label{thm:2factors}
 Let $G$ be a $k$-regular, $(k-1)$-edge-connected graph with an even number of vertices, and let $m$ be an integer such that $1 \le m \le k-1$. Then the graph obtained by removing any $k-m$ edges of $G$ has an $m$-factor. 
\end{theorem}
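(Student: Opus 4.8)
Theorem~\ref{ABC} is quoted from the literature; were I to reprove it, I would run Tutte's $f$-factor theorem on $G':=G-F$, where $F$ is the set of $k-m$ deleted edges, with the constant demand $f\equiv m$. Thus $G'$ has an $m$-factor if and only if, for every pair of disjoint sets $S,T\subseteq V(G')$,
\[
\delta(S,T)\ :=\ m|S|+\sum_{t\in T}\bigl(d_{G'}(t)-m\bigr)-e_{G'}(S,T)-q(S,T)\ \ge\ 0,
\]
where $q(S,T)$ is the number of components $C$ of $G'-S-T$ with $m|C|+e_{G'}(C,T)$ odd. The first step is a parity reduction: since $|V(G)|$ is even, $\sum_v f(v)=m|V(G)|$ is even, so Tutte's theorem forces $\delta(S,T)$ to be even; hence it is enough to prove $\delta(S,T)\ge -1$ for all disjoint $S,T$. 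Two structural facts then carry the rest. First, only $k-m$ edges are removed, so every vertex of $G'$ has degree at least $m$, making each term $d_{G'}(t)-m$ nonnegative. Second, $(k-1)$-edge-connectivity of $G$ together with $|F|=k-m$ gives $\partial_{G'}(X)\ge (k-1)-(k-m)=m-1$ for every nonempty proper $X\subseteq V(G')$; applied to the components of $G'-S-T$, each such component sends at least $m-1$ edges into $S\cup T$. A parity refinement — $\partial_{G'}(X)\equiv\sum_{v\in X}d_{G'}(v)\pmod 2$, read against the definition of the components counted by $q$ — upgrades the bound $m-1$ to $m$ on exactly the components that threaten to make $\delta$ negative.

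The shape of the argument is already visible in the endpoint case $m=k-1$, where $F=\{xy\}$ is a single edge. Here a $(k-1)$-factor of $G-xy$ exists if and only if the induced subgraph $G-x-y$, which has an even number of vertices, has a perfect matching (complement the factor within $G-xy$: the resulting subgraph is a perfect matching of $G-x-y$, and conversely). The matching exists by Tutte's $1$-factor theorem: for any $U\supseteq\{x,y\}$ and any odd component $C$ of $G-U$, $(k-1)$-edge-connectivity gives $\partial_G(C)\ge k-1$ while $\partial_G(C)\equiv k|C|\equiv k\pmod 2$, so in fact $\partial_G(C)\ge k$; summing over the odd components and using $e_G(G[U])\ge 1$ (because $xy\in E(G)$) yields $k\cdot o(G-U)\le k|U|-2$, so $o(G-U)<|U|$, and because $o(G-U)\equiv|U|\pmod 2$ this improves to $o(G-U)\le|U|-2$, which is Tutte's condition. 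Both hypotheses are used essentially: even order in the parity step, $(k-1)$-edge-connectivity in the boundary step.

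For general $m$, rewriting $\delta$ with $D=V(G')\setminus(S\cup T)$, its components $C_1,\dots,C_r$, and $a_i=e_{G'}(S,C_i)$, $b_i=e_{G'}(T,C_i)$, one gets $\delta(S,T)=m|S|-m|T|+2e_{G'}(G'[T])+\sum_i b_i-q(S,T)$. Each index contributes at least $-1$ to $\sum_i b_i-q(S,T)$, and a $-1$ occurs only for a \emph{bad} component — one with $b_i=0$, $m$ odd, and $|C_i|$ odd, which by the second structural fact has $a_i=\partial_{G'}(C_i)\ge m-1$. So everything reduces to bounding the number of bad components against $m|S|-m|T|+2e_{G'}(G'[T])$, and I expect this to be the main obstacle, especially when $|T|>|S|$. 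The standard route is the extremal method for $f$-factor proofs: assuming $\delta(S,T)\le -2$, choose a counterexample minimizing, say, $|T|$, and use local exchanges (moving a vertex out of $T$, or a bad component into $T$) together with the tight edge budget — only $2(k-m)=\sum_v\bigl(k-d_{G'}(v)\bigr)$ degree is missing, and every bad component's boundary of at least $m-1$ edges must land in $S\cup T$ — and the parity facts above to force either an edge cut of $G$ below $k-1$ or an order-parity violation, contradicting the hypotheses. The cubic case $k=3$ is a good sanity check, as there Theorem~\ref{ABC} refines the classical statements recorded in Theorems~\ref{PQR}--\ref{MLK}.
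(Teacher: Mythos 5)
The paper does not prove this statement; it is quoted from Katerinis \cite{regfactors}, so there is no in-paper argument to compare against, and the question is simply whether your sketch amounts to a proof. Your choice of route --- Tutte's $f$-factor theorem with constant demand $f\equiv m$ applied to $G'=G-F$ --- is the natural one, and the pieces you do supply are sound: the parity reduction from $\delta(S,T)\ge 0$ to $\delta(S,T)\ge -1$ via $\delta(S,T)\equiv m|V(G)|\pmod 2$, the degree bound $d_{G'}(v)\ge m$, the boundary bound $\partial_{G'}(X)\ge m-1$, the identification of which components can contribute $-1$ to $\sum_i b_i-q(S,T)$, and the complete treatment of the endpoint case $m=k-1$ (reduction to a perfect matching of $G-x-y$ and verification of Tutte's condition with the parity upgrade $\partial_G(C)\ge k$) are all correct.

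But for $1\le m\le k-2$ the proof is not complete. You reduce the theorem to bounding the number of bad components against $m|S|-m|T|+2e_{G'}(G'[T])+\sum_i b_i$, correctly flag $|T|>|S|$ as the real obstacle, and then say only that ``the standard route is the extremal method'' with local exchanges forcing a small edge cut or a parity violation. That is a plan, not an argument: no exchange is actually carried out, no inequality is extracted from the minimality of $|T|$, and the claimed contradiction is never reached. Note also that your rewritten form of $\delta$ discards the termwise nonnegativity of $d_{G'}(t)-m$, which is precisely the information needed to offset $-m|T|$; recovering it requires charging the at most $2(k-m)$ units of missing degree in $G'$ against the vertices of $T$ and the boundaries of the bad components, and that bookkeeping is the substance of Katerinis's proof. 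As written, the statement is established only in the case $m=k-1$.
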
 

\begin{theorem} \label{YYY}  Let $m$ denote a positive integer  and let $f$  be a function from $\{1,2,3,..., m\}$ into $\mathbb{N}$. Let $G$ be a cubic graph with bridge $e_0$ and let $\phi$ be a zero-sum $\Pi_{i=1}^m\mathbb{Z}_{2f(i)}$-magic labeling of $G$. Then the $i^{th}$ coordinate of $\phi(e_0)$ is in $\{0, 2, 4, 6, ..., 2f(i)-2\}$.

\end{theorem}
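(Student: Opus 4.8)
The plan is to argue one coordinate at a time and pass to $\mathbb{Z}_2$. Fix $i \in \{1,2,\dots,m\}$ and let $\pi \colon \prod_{j=1}^m \mathbb{Z}_{2f(j)} \to \mathbb{Z}_2$ be the map sending a tuple to the residue modulo $2$ of its $i$-th coordinate. This is a group homomorphism precisely because $2f(i)$ is even, so $\psi := \pi \circ \phi \colon E(G) \to \mathbb{Z}_2$ inherits from $\phi$ the property that the weight of every vertex of $G$ is $0$, now in $\mathbb{Z}_2$. (That $\psi$ may take the value $0$ on some edges is irrelevant here, since we only need the weight condition, not that $\psi$ is nowhere zero.) Observing that the $i$-th coordinate of $\phi(e_0)$ lies in $\{0,2,4,\dots,2f(i)-2\}$ if and only if it is even, it suffices to prove $\psi(e_0) = 0$.

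Next I would use the bridge. Write $e_0 = uv$; since graphs here are loopless we have $u \neq v$, and since $e_0$ is a bridge it is not one of a pair of parallel edges, so $G - e_0$ has exactly two components. Let $W$ be the vertex set of the component containing $u$. Then every edge of $G$ other than $e_0$ has both endpoints inside a single component of $G - e_0$, whereas $e_0$ has exactly one endpoint, namely $u$, in $W$. Summing the $\psi$-weights of the vertices of $W$ gives $0$, since each term is $0$; regrouping that same sum edge by edge, each edge with both endpoints in $W$ contributes $2\psi(e) = 0$ in $\mathbb{Z}_2$, each edge with no endpoint in $W$ contributes nothing, and $e_0$ contributes $\psi(e_0)$ exactly once. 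Hence $\psi(e_0) = 0$, and running this over all $i \in \{1,\dots,m\}$ finishes the proof.

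There is essentially no hard step: the computation is just a handshake/double-counting identity, and the only points requiring care are that reduction onto a single coordinate modulo $2$ is a genuine homomorphism (so the zero-sum property survives) and that a bridge meets each of the two sides of $G - e_0$ in exactly one endpoint. I would also remark that cubicity is never used in this argument—the statement holds for any loopless graph possessing a bridge—but the cubic case is the one relevant to the results that follow.
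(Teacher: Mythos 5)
Your proof is correct and is essentially the argument the paper gives: both sum the vertex weights over one component of $G - e_0$ and observe that internal edges contribute twice while the bridge contributes once, forcing the $i$-th coordinate of $\phi(e_0)$ to be even. Your preliminary reduction modulo $2$ via the homomorphism $\pi$ is just a cleaner packaging of the paper's closing remark that ``since $2f(i)$ is even, the result follows,'' and your observation that cubicity is never needed is accurate.
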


\begin{proof} Let $v$ denote a terminal vertex of $e_0$ and let $H$ denote the component of $G - \{e_0\}$ such that   $v \in V(H)$. Then under $\phi$, the sum (mod $2f(i)$) of the $i^{th}$ coordinates of the  weights of the vertices in $V(H)$  is the $i^{th}$ coordinate of the sum $\phi(e_0) + \sum_{e \in E(H)} 2\phi(e)$. Since this sum is 0 and since $2f(i)$ is even, the result follows.  
\end{proof}

\section{On zero-sum $\mathbb{Z}_{2j}^k$-magic graphs}

\begin{theorem} \label{RRR} Let $j \geq 2$ and let $G$ be a connected graph with non-empty bridge  set $B(G)$.  If  the removal of any one bridge of $G$ does not result in a graph with a component that is either  bipartite or isomorphic to $K_1$, then   $\zeta_{2j}(G) \leq 3 + \vert B(G) \vert$. Otherwise, there is  no finite non-trivial abelian group $\mathbb{A}$ such that $G$ is zero-sum $\mathbb{A}$-magic. 
\end{theorem}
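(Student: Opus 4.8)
I would prove the two directions separately. The ``otherwise'' clause is a short parity obstruction. Suppose some bridge $e_0$ of $G$ is such that $G-e_0$ has a component $H$ that is bipartite or isomorphic to $K_1$, and suppose toward a contradiction that $\phi$ is a zero-sum $\mathbb A$-magic labeling of $G$ for some $\mathbb A=(A,+)\in\mathcal A$; let $w$ denote the weight under $\phi$. As $e_0$ is a bridge, $G-e_0$ has exactly the two components $H$ and $H'$, so $e_0$ is the only edge of $G$ having exactly one endpoint in $V(H)$. If $H\cong K_1$, its vertex $v$ has degree $1$ in $G$, whence $0=w(v)=\phi(e_0)$, contradicting $\phi(e_0)\in A-\{0\}$. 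If $H$ is bipartite with parts $X,Y$, say the $H$-endpoint of $e_0$ lies in $X$; then, since every edge of $H$ joins $X$ to $Y$, $\sum_{x\in X}w(x)=\sum_{f\in E(H)}\phi(f)+\phi(e_0)$ while $\sum_{y\in Y}w(y)=\sum_{f\in E(H)}\phi(f)$, and subtracting gives $\phi(e_0)=0$, the same contradiction. So no nontrivial abelian group---finite or not---makes $G$ zero-sum magic.

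For the other case I would give a construction. Assume deleting any bridge of $G$ leaves both resulting components non-bipartite, so that for every bridge $e$ each component of $G-e$ contains an odd cycle. Put $b=|B(G)|\ge 1$ and list the bridges $e_1,\dots,e_b$ with $e_\ell=u_\ell w_\ell$; I will build $\phi\colon E(G)\to\mathbb Z_{2j}^{3+b}-\{\mathbf 0\}$, regarding the target as $\mathbb Z_{2j}^3\times\mathbb Z_{2j}^b$ with the last $b$ coordinates indexed by the bridges. In coordinates $1,2,3$: every component of $G-B(G)$ is either $K_1$ (edgeless) or $2$-edge-connected, so by Theorem~\ref{JJJ} (and the fact that zero-sum $\mathbb H$-magicness passes to any group containing $\mathbb H$) each is zero-sum $\mathbb Z_2^3$-magic, giving a zero-sum $\mathbb Z_2^3$-magic labeling of $G-B(G)$; realize it inside the order-$2$ subgroup $\{0,j\}^3\cong\mathbb Z_2^3$ of $\mathbb Z_{2j}^3$ and label every bridge $\mathbf 0$ in these three coordinates. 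Then coordinates $1,2,3$ have weight $0$ at every vertex of $G$, and every non-bridge edge already carries a nonzero label there.

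In the coordinate indexed by a bridge $e_\ell$, I would set $\phi(e_\ell)$ equal to $2$ (nonzero in $\mathbb Z_{2j}$ since $j\ge 2$) and then, on each of the two components $G_\ell^1\ni u_\ell$ and $G_\ell^2\ni w_\ell$ of $G-e_\ell$, place a $\mathbb Z_{2j}$-assignment to its edges (zeros allowed) whose weight in this coordinate is $-2$ at the endpoint of $e_\ell$ it contains and $0$ at every other vertex, with $0$ on all remaining edges. To produce such an assignment on $G_\ell^i$, fix an odd cycle $C\subseteq G_\ell^i$ and a shortest path $P$ from the relevant endpoint of $e_\ell$ to $V(C)$ (possibly trivial): labeling the edges of $P$ alternately $2,-2,2,\dots$ carries a residual $-2$ to that endpoint and a residual $\pm 2$ to the far end of $P$, with $0$ in between, and labeling the edges of $C$ alternately $\pm 1$ leaves a residual $\pm 2$ at a single vertex of $C$; one then chooses the two sign patterns so that these residuals cancel where $P$ meets $C$. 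Every label used lies in $\{1,-1,2,-2\}\subseteq\mathbb Z_{2j}-\{0\}$, and this coordinate ends up with weight $0$ everywhere and $\phi(e_\ell)\ne 0$ in it.

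Assembling all coordinates, every vertex of $G$ has weight $\mathbf 0$, every non-bridge edge is nonzero in coordinates $1,2,3$, and every bridge $e_\ell$ is nonzero in coordinate $3+\ell$; hence $\phi$ is a zero-sum $\mathbb Z_{2j}^{3+b}$-magic labeling, so $\zeta_{2j}(G)\le 3+|B(G)|$. The step I expect to need the most care is the cycle-plus-path gadget of the previous paragraph: verifying that, for every possible parity of $|E(P)|$, the sign patterns on $C$ and on $P$ (and, if needed, the choice to leave $+2$ or $-2$ at the junction) can be selected so that the weight at $u_\ell$ (resp.\ $w_\ell$) is exactly $-2$ and all other weights in that coordinate vanish, including the degenerate case of a trivial path. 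This is exactly where the hypothesis---both sides of every bridge non-bipartite---is invoked, and where $j\ge 2$ is essential (one needs $2\ne 0$ and $-2\ne 0$ in $\mathbb Z_{2j}$); consistently, by Theorem~\ref{XXX} the bound fails for $j=1$.
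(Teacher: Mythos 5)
Your proposal is correct and follows essentially the same route as the paper: the same bipartite/$K_1$ parity obstruction for the negative direction, and for the positive direction the same decomposition into three $\mathbb{Z}_2^3$-derived coordinates on $G-B(G)$ (via Theorem~\ref{JJJ} and scaling into the order-$2$ subgroup) plus one coordinate per bridge handled by an odd-cycle-plus-shortest-path gadget. The only difference is cosmetic: your gadget uses labels $\pm 1,\pm 2$ uniformly, whereas the paper splits into cases according to the parity of $j$ and uses labels built from $j$ and $j/2$; both work.
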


\begin{proof}  Let $B(G) = \{b_1, b_2, ..., b_m\}$. 

Suppose that  $b_i \in B(G)$  such that $G - \{b_i\}$ has either a trivial component or a bipartite component. If $G - \{b_i\}$ has a trivial component, then $G$ has a vertex of degree 1, implying that    $G$ is zero-sum $\mathbb{A}$-magic for no finite non-trivial abelian group $\mathbb{A}$. If  $G - \{b_i\}$ has a bipartite component $H$, we suppose that the parts of $H$  are $X$ and $Y$ such that, with no loss of generality, $b_i$ is incident to $x \in X$.  Let $E_Y$ and $E_X$ respectively denote the set of  edges  of $G$ that are incident to some vertex in $Y$ and some vertex in  $X$. (Note that $E_Y = E(H)$ and $E_X= E(H) \bigcup \{b_i\}$.) Then, supposing  the contrary that $G$ has a zero-sum $\mathbb{A}$-magic labeling $\phi$  for finite non-trivial abelian group $\mathbb{A}$, it follows that  under $\phi$, the sum of the labels of the edges  in $E_Y$ and the sum of the labels of the edges  in $E_X$ must each be 0. We thus have the contradiction that $\phi(b_i) = 0$.

Suppose there is no bridge  $b_i$  of $G$ such that $G - \{b_i\}$ has either a trivial component or a bipartite component. Then each non-trivial component of $G-B(G)$  is 2-edge-connected, implying that each non-trivial component admits (by Theorem \ref{JJJ})  a zero-sum $\mathbb{Z}_2^3$-magic labeling $\phi$.  Since $j\phi$ (base 10 multiplication) is thus a zero-sum $\mathbb{Z}_{2j}^3$-labeling,  it follows that $G - B(G)$ is zero-sum $\mathbb{Z}_{2j}^3$-magic for each $j \geq 2$.  

For fixed arbitrary $j_0 \geq 2$, let $\phi'$ denote a zero-sum $\mathbb{Z}_{2j_0}^3$-magic  labeling of $G-B(G)$. We proceed to  construct a  zero-sum $\mathbb{Z}_{2j_0}^{3 + m}$-magic labeling  $\phi''$ of $G$. 

Select arbitrary $r$, $1 \leq r \leq m$, and consider   $b_r = \{x_r,y_r\} $ where respectively, $x_r$ and $y_r$ are vertices in distinct components  $X_r$ and $Y_r$ of $G-\{b_r\}$. Since neither $X_r$ nor $Y_r$ is bipartite or trivial, each contains an odd cycle: $C_{X_r}$ and $C_{Y_r}$, respectively.  If $x_r$ is not incident to $C_{X_r}$, let $P_{X_r}$  be a shortest path in $X_r$ from $x_r$ to $C_{X_r}$. Similarly, if $y_r$ is not incident to $C_{Y_r}$, let $P_{Y_r}$  be a shortest path in $Y_r$ from $y_r$ to $C_{Y_r}$.  

Consider the subgraph $G_r$ of $G$ induced by $b_r$ and the edges of $C_{X_r}$, $P_{X_r},$ $C_{Y_r}$,  and $P_{Y_r}$. We establish a zero-sum $\mathbb{Z}_{2j_0}$-magic labeling $\phi_r$ of $G_r$.

\noindent Case 1. $j_0$ is even.  Assign $j_0$ to $b_r$ and each edge along  $P_{X_r}$  and  $P_{Y_r}$. It is easy to see that labels of ${j_0 \over 2}$ and ${3j_0 \over 2}$ can be deployed about the edges of $C_{X_r}$ and $C_{Y_r}$ such that  the  weight of each vertex in $V(G_r)$ under $\phi_r$  is 0. 

\noindent Case 2. $j_0$ is odd. Assign $j_0-1$ to $b_r$. To each edge incident to $P_{X_r}$  or  $P_{Y_r}$, assign $j_0-1$ or $j_0 + 1$ as appropriate.  It is easy to see that labels of ${j_0 -1\over 2}$, ${j_0 + 1 \over 2}$,   ${3j_0 -1 \over 2}$, and ${3j_0 + 1 \over 2}$  can be deployed about the edges of $C_{X_r}$ and $C_{Y_r}$ such that  the  weight of each vertex in $V(G_r)$ under $\phi_r$  is 0. 

We now establish a zero-sum $\mathbb{Z}_{2j_0}^{3 + m}$-magic labeling  $\phi''$ of $G$ as follows:  Let  $e \in E(G).$  For $1 \leq i \leq m$,  let the $i^{th}$ coordinate of $\phi''(e)$ be $\phi_i(e)$ if $e \in E(G_i)$; 0 otherwise.  For $1 \leq i \leq 3$, let the $(m + i)^{th}$ coordinate of $\phi''(e)$ be the $i^{th}$ coordinate of $\phi'(e)$ if $e$ is in $G - B(G)$;  0 otherwise. 
\end{proof}

Suppose $G$ is a a connected   $r$-regular graph  for $r \geq 2$ and suppose $j \geq 2$.   If $G$ has no bridges, then by Theorem \ref{JJJ}, $G$ is zero-sum $\mathbb{Z}_{2j}^k$-magic  for some $k \in \{1,2,3\}$. If $G$ has at least one bridge, then $r$ is odd, and hence the removal of any bridge results in two components each with an odd cycle.  Therefore, by Theorem \ref{RRR}, we have the following corollary. 

\begin{corollary} \label{TTT} For every $j, r \geq 2$, $\zeta_{2j}(G) \leq 3 +\vert B(G) \vert$ for every  $r$-regular graph $G$ (connected or not). 
\end{corollary}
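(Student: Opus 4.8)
The plan is to reduce at once to the connected case and then split on whether $G$ has a bridge. First I would recall two facts already noted in the setup: $G$ is zero-sum $\mathbb{A}$-magic if and only if every component of $G$ is, and a graph that is zero-sum $\mathbb{Z}_{2j}^{k_0}$-magic is zero-sum $\mathbb{Z}_{2j}^{k}$-magic for all $k \ge k_0$ (since $\mathbb{Z}_{2j}^{k_0}$ is a non-trivial subgroup of $\mathbb{Z}_{2j}^{k}$). As $|B(G_\ell)| \le |B(G)|$ for each component $G_\ell$, it then suffices to establish $\zeta_{2j}(G_\ell) \le 3 + |B(G_\ell)|$ for every connected component $G_\ell$; each such $G_\ell$ is then zero-sum $\mathbb{Z}_{2j}^{3 + |B(G)|}$-magic, and hence so is $G$.

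So take $G$ connected and $r$-regular with $r \ge 2$ and $j \ge 2$. If $G$ is bridgeless, I would apply Theorem~\ref{JJJ} to get a zero-sum $\mathbb{Z}_2^{k}$-magic labeling $\phi$ with $k \in \{1,2,3\}$; then $j\phi$ (ordinary integer multiplication, reduced mod $2j$) is a zero-sum $\mathbb{Z}_{2j}^{k}$-labeling, since its labels are the nonzero element $j$ in each active coordinate, so $G$ is zero-sum $\mathbb{Z}_{2j}^{k}$-magic with $k \le 3 \le 3 + |B(G)|$.

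The substantive case is a connected $r$-regular $G$ with $B(G) \ne \emptyset$, where the task is to verify the positive hypothesis of Theorem~\ref{RRR}. I would first note that $r$ must be odd: if $b$ is a bridge and $H$ is the component of $G - \{b\}$ meeting one endpoint of $b$, then $H$ has exactly one vertex of degree $r - 1$ and all others of degree $r$, so $\sum_{v \in V(H)} \deg_H(v) = |V(H)|\,r - 1$, which is even only when $r$ is odd. Next, for any bridge $b_i$, each component $H$ of $G - \{b_i\}$ is nontrivial, because the bridge endpoint lying in $H$ has degree $r - 1 \ge 1$ there, so $H$ has an edge; and $H$ is non-bipartite, because a bipartition $(X,Y)$ with the unique degree-$(r-1)$ vertex in, say, $X$ would force $|X|\,r - 1 = |Y|\,r$, i.e. $r\big(|X| - |Y|\big) = 1$, impossible for $r \ge 2$. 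Thus removing any one bridge of $G$ leaves no bipartite and no trivial component, Theorem~\ref{RRR} applies, and $\zeta_{2j}(G) \le 3 + |B(G)|$.

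I do not expect a real obstacle here; the only delicate points are the parity and bipartition counting arguments that rule out trivial and bipartite components after deleting a bridge, since these are exactly what decides which branch of Theorem~\ref{RRR} we land in, together with the bookkeeping in the disconnected case, where the bound is phrased with $|B(G)|$ and it is the upward monotonicity in the exponent $k$ that lets each component's (possibly smaller) bound be absorbed into it.
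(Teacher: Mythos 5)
Your proposal is correct and follows essentially the same route as the paper: reduce to the connected case, handle bridgeless $G$ via Theorem~\ref{JJJ} together with multiplication by $j$, and for bridged $G$ verify the hypothesis of Theorem~\ref{RRR} by showing that $r$ must be odd and that deleting a bridge leaves no trivial or bipartite component. The paper states these last facts without proof ("$r$ is odd, and hence the removal of any bridge results in two components each with an odd cycle"), whereas you supply the handshake-lemma and bipartition-counting arguments explicitly; this is just a more detailed rendering of the same argument.
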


In the next theorem, we give necessary and sufficient conditions for $G$  to be  zero-sum $\mathbb{Z}_{2j}^6$-magic in the case that $j$ is even. 

\begin{theorem} \label{SSS} Let $j$ be a positive even integer  and let $G$ be a connected graph with non-empty bridge  set $B(G)$.  If  the removal of any one bridge of $G$ does not result in a graph with a component that is either  bipartite or isomorphic to $K_1$, then $\zeta_{2j}(G) \leq 6$. 
 Otherwise, there is  no finite non-trivial abelian group $\mathbb{A}$ such that $G$ is zero-sum $\mathbb{A}$-magic. 
\end{theorem}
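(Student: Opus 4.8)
We treat the two alternatives of the statement separately. When the removal of some bridge of $G$ produces a component that is $K_1$ or bipartite, the assertion that $G$ is zero-sum $\mathbb{A}$-magic for no finite nontrivial $\mathbb{A}$ is proved exactly as in Theorem \ref{RRR}: a $K_1$ component forces a vertex of degree $1$ in $G$, and a bipartite component $H$ of $G-\{b\}$ with parts $X$ and $Y$ forces $\sum_{e\in E(H)}\phi(e)=0$ read at $X$ and at $Y$, hence $\phi(b)=0$; neither argument uses the parity of $j$.

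In the remaining case, the plan is to build a zero-sum $\mathbb{Z}_4^6$-magic labeling of $G$; because $j$ is even, $\langle j/2\rangle=\{0,j/2,j,3j/2\}$ is a subgroup of $\mathbb{Z}_{2j}$ isomorphic to $\mathbb{Z}_4$, so $\mathbb{Z}_4^6\le\mathbb{Z}_{2j}^6$ and this is enough. Split $E(G)$ into the bridges and the rest, and devote at most three coordinates to each. For the non-bridge part: the hypothesis gives $\delta(G)\ge 2$, and every non-trivial component of $G-B(G)$ is $2$-edge-connected, hence zero-sum $\mathbb{Z}_2^{k}$-magic for some $k\le 3$ by Theorem \ref{JJJ}; doubling such a labeling places its values in $\{0,2\}\subseteq\mathbb{Z}_4$, and assembling the components yields a zero-sum $\mathbb{Z}_4^3$-magic labeling $\phi'$ of $G-B(G)$ ($K_1$-components contributing nothing). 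Put $\phi'$ in coordinates $4,5,6$ and $0$ on every bridge; every non-bridge edge is then already nonzero, so the remaining three coordinates are free to vanish on non-bridge edges.

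For coordinates $1,2,3$, I would choose a bridge labeling $\psi\colon B(G)\to\mathbb{Z}_4^3\setminus\{0\}$, place it there and $0$ elsewhere, and then add inside each component $H_i$, still in coordinates $1,2,3$, a compensating labeling realizing the vertex-weights $-\sum_{b\ni v}\psi(b)$ for $v\in V(H_i)$ — harmless to the nonzero requirement by the previous remark. Over $\mathbb{Z}_4$, a prescribed weight-vector of a connected non-bipartite graph is attainable iff its total is even (realizable concretely by routing each vertex of defect $2$ along a shortest path to a fixed odd cycle carrying the alternating pattern $1,3$, exactly as in Case 1 of the proof of Theorem \ref{RRR}), while for a connected bipartite graph it is attainable iff the totals over the two parts coincide, and a $K_1$ realizes only the zero vector. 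Since $T_G$ is a tree, the "even total" conditions at all the components force each coordinate of each $\psi(b)$ to be even, so $\psi(b)\in\{0,2\}^3\setminus\{0\}$, and with $S_c:=\{b:\psi(b)_c=2\}$ the surviving conditions say exactly that each $S_c$ is an edge set of $T_G$ of even degree at every vertex whose component is bipartite, while $\psi(b)\neq 0$ says $S_1\cup S_2\cup S_3=B(G)$.

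Thus the argument reduces to a combinatorial fact about $T_G$: if $\mathcal B$ is a set of vertices of a tree $T$ containing no leaf of $T$, then $E(T)$ can be covered by at most three edge sets each of even degree at every vertex of $\mathcal B$. This is exactly where the hypothesis is used: it is equivalent to saying that every leaf of $T_G$ is a non-bipartite component, i.e.\ that the relevant $\mathcal B$ avoids all leaves, and then every edge of $T$ lies on a path joining two leaves and any such leaf-to-leaf path is an edge set of even degree at every vertex of $\mathcal B$; it remains to see that a bounded number of such sets covers $E(T)$. I expect this covering lemma, together with the bookkeeping guaranteeing that every bridge receives a nonzero label, to be the only substantial step; the reduction to $\mathbb{Z}_4$, the appeal to Theorem \ref{JJJ}, and the local realizations inside components are routine, and the total count is $3+3=6$.
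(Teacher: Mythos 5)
Your reduction is sound up to the final combinatorial step, and it takes a genuinely different route from the paper: you embed $\mathbb{Z}_4$ into $\mathbb{Z}_{2j}$ via $\langle j/2\rangle$, spend three coordinates on the $2$-edge-connected components of $G-B(G)$ via Theorem \ref{JJJ}, and spend the other three on a bridge labeling $\psi$ plus in-component compensation. The parity analysis forcing each coordinate of $\psi(b)$ into $\{0,2\}$, the translation of the solvability conditions into ``each $S_c$ has even degree at every vertex of $T_G$ whose component is bipartite or trivial,'' and the observation that the hypothesis makes every leaf of $T_G$ a non-bipartite, non-trivial component are all correct. (The paper instead disposes of $\vert B(G)\vert\le 3$ by Theorem \ref{RRR}, identifies the attachment vertices of the leaf components of $T_G$ into one vertex to obtain a $2$-edge-connected graph, applies Theorem \ref{JJJ} there, and repairs the leaf components with paths to odd cycles plus a further application of Theorem \ref{JJJ} inside them; your scheme is more uniform but hinges entirely on the lemma below.)

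The gap is that the covering lemma carrying all the weight is asserted, not proved --- and you even retreat to ``a bounded number of such sets,'' which is not enough: you need exactly three sets to land on $\zeta_{2j}(G)\le 6$. Leaf-to-leaf paths individually have even degree on $\mathcal{B}$, but unions of overlapping paths generally do not, so ``every edge lies on such a path'' does not by itself produce three admissible sets covering $E(T_G)$. The lemma is true, and here is how to close it: let $Z=V(T_G)\setminus\mathcal{B}$ (nonempty, since it contains every leaf of $T_G$), and contract $Z$ to a single vertex $z$. Edges with both ends in $Z$ impose no condition and may all be placed in $S_1$; for the remaining edges, a subset has even degree at every vertex of $\mathcal{B}$ if and only if it is an even subgraph of the contracted multigraph $T'$, since evenness at $z$ comes for free from the handshake identity. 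Every edge of $T_G$ extends to a leaf-to-leaf path, which becomes a closed walk through $z$ traversing that edge exactly once, so $T'$ is connected and bridgeless, hence $2$-edge-connected; Theorem \ref{JJJ} (equivalently, the three-even-subgraph cover of bridgeless graphs) applied to $T'$ then yields the three sets $S_1,S_2,S_3$. With that lemma supplied, your argument is complete.
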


\begin{proof} Suppose the removal of a bridge of $G$ results in an isolated vertex or a bipartite component. Then per the argument given in Theorem \ref{RRR}, there is no finite non-trivial abelian group $\mathbb{A}$ such that $G$ is zero-sum $\mathbb{A}$-magic.

Suppose the removal of a bridge of $G$ results in no isolated vertex or  bipartite component. If $\vert B(G)\vert = m \leq 3$, then by Theorem \ref{RRR}, $G$ is zero-sum $\mathbb{Z}_{2j}^{3+m}$-magic, implying that $G$ is zero-sum $\mathbb{Z}_{2j}^{6}$-magic. We therefore assume $\vert B(G) \vert \geq 4$.

The tree $T_G$ has at least two leaves, each of which represents a component of $G-B(G)$. Let $H_0, H_1, H_2, ..., H_s$ be the components of $G-B(G)$ that are represented by leaves of $T_G$, and let $b_i = \{x_i, y_i\}$ denote the bridge of $G$ that is incident to $H_i$, where $y_i$ is the vertex in $H_i$. (The $x_i$s may not be pairwise distinct.) For each $i$, $0 \leq i \leq s$, $G - \{b_i\}$ results in two components, one of which is $H_i$ which in turn must contain an odd cycle $C_i$ by hypothesis. If $y_i$ is not incident to $C_i$, there is a shortest path $P_i$ from $y_i$ to $C_i$ in $H_i$.

Let $G^*$ be the subgraph of $G$ that results by deleting all of the vertices of each $H_i$ except $y_i$,  and let  $G^{'}$ denote the graph that results by  identifying the vertices $y_0, y_1, y_2, ..., y_s$ in $G^*$ to a single vertex $y'$.  We observe that each bridge $b_i$,  $0 \leq i \leq s$, induces an edge $b_i' = \{x_i, y'\}$ in $G'$ and a bridge $b_i = \{x_i, y_i\}$ in $G^*$ where $y_i$ has degree 1  in $G^*$.

By Theorem \ref{JJJ}, there exists a zero-sum $\mathbb{Z}_2^3$-magic labeling  of $G'$ which, upon multiplication by $j$ (base 10),   results in  a zero-sum  $\mathbb{Z}_{2j}^3$-magic labeling $\phi'$ of  $G'$.  We let $\phi^*$ denote an edge-labeling of $G^*$  such that

\vskip 5pt

: if $e$ is an edge common to $G'$ and $G^*$, then $\phi^*(e) = \phi'(e)$;

: if $e = b_i$, then $\phi^*(e) = \phi'(b_i')$.

\vskip 5pt

\noindent It is clear that except for each $y_i$ of $G^*$, each vertex of $G^*$ has incident edges whose labels under $\phi^*$ sum to 0.  Moreover, due to the base 10 multiplication upon which $\phi'$ is founded, every  label under $\phi^*$ of each $e \in E(G^*)$ has coordinates of 0 or $j$,    with at least one coordinate equalling $j$. 

Let $G^{**}$ be the subgraph of $G$ induced by  precisely the edges of $G^*$ and the edges of each $P_i $ and  $C_i$. Let $\phi^{**}$ be an edge labeling of $G^{**}$ such that 

: if $e$ is not among the edges of any $P_i$ or $C_i$, $\phi^{**}(e) = \phi^*(e)$, and

: if $e$ is an edge of $P_i$, then $\phi^{**}(e) = \phi^*(b_i)$, and

: if $e$ is an edge of $C_i$, then $\phi^{**}(e) = (a,b,c)$ where $(a,b,c) \neq (0,0,0)$ and $a,b,c   \in \{0, {j \over 2}, {3j \over 2}\}$. It is easy to see that  the labels can be so assigned such that  each vertex incident to $C_i$ has incident edges whose labels sum to 0. Thus, $\phi^{**}$ is a zero-sum $\mathbb{Z}_{2j}^3$-magic labeling of $G^{**}$.

For each $i$, $0 \leq i \leq s$, let $\phi_i$ be a zero-sum $\mathbb{Z}_{2j}^3$-magic labeling of the  2-edge-connected graph $H_i$. We complete the construction of $\phi$ as follows:

: if  $e$ is an edge common to  $G^*$ and $G^{**}$, the first three coordinates of $\phi(e)$ are respectively the three coordinates  of $\phi^{**}(e)$ and the last three coordinates  are each 0;

: if $e$  is in the edge set of some $H_i$ but is not an edge incident to $P_i$ or $C_i$, the last three coordinates  of $\phi(e)$ are respectively the three coordinates  of  $ \phi_i(e)$ and the first three coordinates  are each 0;

: if $e$ is   in the edge set of some $P_i$ or $C_i$, the last three coordinates  of $\phi(e)$ are respectively the three coordinates  of  $ \phi_i(e),$ and the first three coordinates  are respectively the coordinates  of $\phi^{**}(e)$. 
\end{proof}

\vskip 10pt

In an argument analogous to that of the previous corollary, we have

\begin{corollary} \label{EEE} For every $j, r \geq 2$ where $j$ is even,  $\zeta_{2j}(G) \leq 6$ for every   $r$-regular graph $G$ (connected or not).
\end{corollary}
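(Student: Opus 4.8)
The plan is to mimic exactly the reduction used to obtain Corollary~\ref{TTT} from Theorem~\ref{RRR}, but now invoking Theorem~\ref{SSS} in place of Theorem~\ref{RRR}. Since $G$ is zero-sum $\mathbb{A}$-magic iff each component is, it suffices to treat a connected $r$-regular graph $G$ with $r\geq 2$ and $j$ even.

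First I would split on whether $B(G)$ is empty. If $G$ has no bridges, then $G$ is $2$-edge-connected, so by Theorem~\ref{JJJ} there is some $k\in\{1,2,3\}$ with $G$ zero-sum $\mathbb{Z}_2^k$-magic; multiplying such a labeling coordinatewise by $j$ (base-$10$ multiplication) yields a zero-sum $\mathbb{Z}_{2j}^k$-magic labeling, and padding with zero coordinates up to length $6$ gives zero-sum $\mathbb{Z}_{2j}^6$-magic. Hence $\zeta_{2j}(G)\leq 6$ in this case.

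Next, suppose $B(G)\neq\emptyset$. Because $G$ is $r$-regular and has a bridge, a standard parity argument forces $r$ to be odd (a bridge $b$ partitions $V(G)$ into two sets; one side $S$ satisfies $\sum_{v\in S}\deg(v) = r|S|$ which must have the same parity as the number of edges leaving $S$, namely $1$, so $r|S|$ is odd, forcing $r$ odd). Removing any bridge $b$ then leaves two components, each of which has all degrees equal to $r$ except for the single endpoint of $b$ in that component, whose degree is $r-1$; since $r\geq 3$ is odd, each such component has at least one vertex of odd degree $r\geq 3$ and in particular is non-trivial, and moreover contains an odd cycle (an $r$-regular-minus-one-edge graph with $r\geq 3$ is not bipartite, e.g.\ because it has minimum degree $\geq 2$ and, having a vertex of odd degree it is non-bipartite — more carefully, one argues it cannot be bipartite since a bipartite graph in which every vertex has degree in $\{r-1,r\}$ with $r$ odd would force the two parts to have equal edge-counts from each side, contradicting the single pendant-type vertex; alternatively cite that a $2$-edge-connected component of $G-B(G)$ of an $r$-regular graph with $r$ odd must contain an odd cycle). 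Thus the removal of any single bridge of $G$ never produces a bipartite component or a $K_1$, so the hypothesis of Theorem~\ref{SSS} is met, and we conclude $\zeta_{2j}(G)\leq 6$.

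The only mildly delicate point is the non-bipartiteness claim for the two components of $G-\{b\}$; the cleanest route is the parity observation that an $r$-regular graph with $r$ odd has an even number of vertices, so after deleting a bridge each side has one vertex of even degree $r-1$ and the rest of odd degree $r$, forcing an odd number of odd-degree vertices unless the side is a single vertex — but a single-vertex side would make $b$ incident to a degree-$1$ vertex, contradicting $r\geq 3$ — so each side has an odd-degree vertex and hence an odd number of them, and such a graph cannot be bipartite with balanced degree sums; I would spell this out in a line. Everything else is verbatim the corollary-from-theorem passage already in the text, so no further obstacle arises. Finally, for $j$ even one also has $2j$ even, so Theorem~\ref{SSS} (which is stated for $j$ a positive even integer, giving the group $\mathbb{Z}_{2j}$) applies directly with the $j$ of the corollary.
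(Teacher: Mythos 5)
Your argument is correct and follows essentially the paper's own route: the paper obtains this corollary from Theorem \ref{SSS} exactly as Corollary \ref{TTT} is obtained from Theorem \ref{RRR}, namely via the bridgeless case (Theorem \ref{JJJ} plus multiplication by $j$) and the observation that a bridge in an $r$-regular graph forces $r$ odd, so that neither side of a deleted bridge is trivial or bipartite. One small caution: your parenthetical claim that possessing a vertex of odd degree makes a graph non-bipartite is false as stated (e.g.\ $K_{1,3}$), but the edge-count argument you also give --- bipartiteness would force $r\vert X\vert - 1 = r\vert Y\vert$, impossible for $r \geq 2$ --- is the correct and sufficient justification, so no genuine gap remains.
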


\section{Cubic graphs are zero-sum $\mathbb{Z}_4^2$-magic}

In this section, we narrow our focus to the zero-sum $\mathbb{Z}_{2j}^k$-magicness of cubic graphs.

By Theorem \ref{AAA},  each  cubic graph $G$ has $zim(G) = \mathbb{N}-\{2\} $ or $\mathbb{N} - \{2,4\}$. Thus, if $j \geq 3$, $G$ is zero-sum $\mathbb{Z}_{2j}^k$-magic for $k \geq 1$. If $j = 1$, then by Theorems \ref{JJJ}, \ref{BBB}, and \ref{XXX}, $G$  is 
\begin{itemize} 

\item zero-sum $\mathbb{Z}_2^k$-magic for precisely $k \geq 2$ if and only if it has  chromatic index 3;  

  \item zero-sum $\mathbb{Z}_2^k$-magic for precisely  $k \geq 3$ if $G$ is 2-edge-connected with chromatic index 4; 

\item zero-sum $\mathbb{Z}_2^k$-magic for no $k$ if $G$ has a bridge.
\end{itemize}
\noindent It thus remains to explore the case $j = 2$.

By Corollary \ref{EEE}, every  cubic graph is zero-sum $\mathbb{Z}_4^k$-magic for $k \geq 6$. Moreover, by Theorems \ref{AAA} and \ref{STU}, every   cubic graph with at most two bridges is zero-sum $\mathbb{Z}_4^k$-magic for all $k \geq 1$. We thus focus on   cubic graphs with at least 3 bridges in the cases $1 \leq k \leq 5$.

We readily adapt the proof of Theorem \ref{SSS} above to establish the following.

\begin{theorem} \label{PPD} For every cubic graph $G$, $\zeta_4(G) \leq 3$.

\end{theorem}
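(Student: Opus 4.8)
The plan is to mimic the construction in the proof of Theorem~\ref{SSS}, but exploit the fact that in a cubic graph every $H_i$ represented by a leaf of $T_G$ has at most one vertex of degree~$2$ (indeed exactly one, the endpoint $y_i$ of the incident bridge $b_i$), so $s(H_i)$ is a $2$-edge-connected cubic graph; and, more importantly, to economize on coordinates. Recall from Theorem~\ref{SSS} that the six coordinates came from two separate $\mathbb{Z}_{2j}^3$-magic pieces: three coordinates handling the ``core'' $G^{**}$ (built from $G^*$ together with the paths $P_i$ and odd cycles $C_i$) and three coordinates handling each $2$-edge-connected leaf-component $H_i$ independently. For cubic $G$ we want to collapse this to three coordinates total. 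First I would set up $G^*$, $G'$, $G^{**}$ exactly as in Theorem~\ref{SSS}: delete from each leaf-component $H_i$ all vertices except $y_i$, identify all the $y_i$ to a single vertex $y'$, obtain a $2$-edge-connected cubic graph $G'$, take a zero-sum $\mathbb{Z}_2^3$-magic labeling of $G'$ (which exists by Theorem~\ref{JJJ}, and multiply by $j=2$ to land in $\mathbb{Z}_4^3$), pull it back to a labeling $\phi^*$ of $G^*$ in which every vertex except the $y_i$ has zero weight and every edge label has coordinates in $\{0,2\}$, not all zero, and then extend across the paths $P_i$ (labeling each $P_i$-edge by $\phi^*(b_i)$) and around the odd cycles $C_i$ (deploying labels with coordinates in $\{0,1,3\}$, not all zero, so each cycle-vertex gets zero weight). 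This produces a zero-sum $\mathbb{Z}_4^3$-magic labeling $\phi^{**}$ of $G^{**}$.

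The new ingredient is handling the part of each $H_i$ not already covered by $G^{**}$ \emph{using the same three coordinates} rather than three fresh ones. The key observation is that in the cubic setting $H_i$ has exactly one vertex of degree~$2$ (namely $y_i$), so $s(H_i)$ is a $2$-edge-connected cubic graph and by Theorem~\ref{PQR} it has a $1$-factor; equivalently $H_i$ itself, being ``almost cubic'', is amenable to the factor machinery. The idea is to find inside each $H_i$ a subgraph (using the $1$-factor / $2$-factor decomposition of $s(H_i)$, or Theorem~\ref{MLK}) on which one can place labels from $\{0,1,2,3\} \subseteq \mathbb{Z}_4$ so that: every vertex of $H_i$ other than $y_i$ already incident to $C_i$ or $P_i$ keeps weight $0$ after the additional contribution, every vertex of $H_i$ not on $C_i\cup P_i$ gets weight $0$, and at $y_i$ the three edges of $H_i$ incident to $y_i$ (one of which may be along $P_i$, or $y_i$ may already lie on $C_i$) contribute a weight equal to $-\phi^{**}(b_i) = -\phi^*(b_i)$, so that together with the bridge $b_i$ the total weight at $y_i$ is $0$. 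In other words, instead of zeroing out $H_i$ in isolation in three new coordinates, we zero out $H_i$ \emph{consistently with the values $\phi^{**}$ has already forced on $b_i$, $P_i$, $C_i$} in the existing three coordinates. Because a zero-sum labeling of a $2$-edge-connected cubic graph can be adjusted to prescribe any admissible (even) value on a single designated edge — this is exactly the flexibility Theorems~\ref{MLK} and~\ref{ABC} provide — three coordinates suffice.

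Concretely, for each $i$ I would proceed as follows. Let $H_i$ have its unique degree-$2$ vertex $y_i$; form $s(H_i)$, a $2$-edge-connected cubic graph, with the single new edge $f_i$ (the edge replacing $y_i$). By Theorem~\ref{MLK}(a) there is a $1$-factor $M_i$ of $s(H_i)$ containing $f_i$, whose complement is a $2$-factor, i.e.\ a disjoint union of cycles covering all vertices. Label the $1$-factor edges and the $2$-factor edges with coordinates drawn from $\{0,2\}$ and $\{1,3\}$ (or $\{0,1,2,3\}$) in each of the three coordinate slots so that every internal vertex of $H_i$ receives weight $0$ and the ``split'' edges of $s(H_i)$ at $f_i$, when pulled back to the two $H_i$-edges incident to $y_i$, together with the third $H_i$-edge at $y_i$ (the $P_i$-edge, already labeled $\phi^*(b_i)$ under $\phi^{**}$), produce total weight $-\phi^{*}(b_i)$ at $y_i$; combined with $b_i$ this is $0$. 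Handle separately the degenerate cases ($y_i$ lies on $C_i$ directly so $P_i$ is empty; or $H_i$ is itself a triangle, etc.) by ad hoc labelings as in Theorem~\ref{SSS}. Then $\phi$ on all of $G$ is the amalgamation, in these three coordinates, of $\phi^{**}$ on $G^{**}$ with these labelings on the $H_i$, agreeing on the overlap edges; every vertex of $G$ has weight $0$, so $G$ is zero-sum $\mathbb{Z}_4^3$-magic, giving $\zeta_4(G)\le 3$.

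The main obstacle I expect is the bookkeeping at the attachment vertices $y_i$: one must verify that the target weight $-\phi^{*}(b_i)$ forced at $y_i$ by the already-committed labels on $b_i$ and $P_i$ (or on $C_i$, if $y_i\in C_i$) is always realizable by a zero-sum-type labeling of the cubic graph $s(H_i)$ with a prescribed value on $f_i$ — i.e.\ that the parity/admissibility condition from Theorem~\ref{YYY} (every bridge/relevant edge must carry an even coordinate, here coordinates in $\{0,2\}$) is consistent with what $\phi^{**}$ assigns, and that the resulting labels never take the value $0$ where a nonzero label is required. This amounts to checking a small finite list of cases (whether $P_i$ is trivial or not, whether $y_i$ is on a triangle of $H_i$, parity of the odd cycle $C_i$), and reusing the cube-factor theorems to supply the needed flexible labelings; none of it is deep, but it must be carried out carefully.
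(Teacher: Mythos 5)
Your overall architecture (form $G^*$ and $G'$, pull back a doubled $\mathbb{Z}_2^3$-magic labeling of $G'$ so that every edge of $G^*$ carries a label with coordinates in $\{0,2\}$ and at least one coordinate equal to $2$, then repair each leaf component $H_i$ inside the same three coordinates via the factor theorems) is exactly the paper's. But the one concrete scheme you propose for the repair step provably fails, and the step you defer as ``bookkeeping at the attachment vertices $y_i$'' is precisely the content of the theorem, not a routine verification. If you take a $1$-factor $M_i$ of $s(H_i)$ through the smoothing edge $f_i=y_i'y_i''$ and put coordinates from $\{0,2\}$ on $M_i$ and from $\{1,3\}$ on the complementary $2$-factor, then the zero-weight condition at each internal vertex forces the $M_i$-label $a$ to satisfy $a=-2b=2b$, hence $a$ even, hence $2a=0$ in $\mathbb{Z}_4$; but $y_i$ is incident in $G$ to the bridge $b_i$ and to the two half-edges of $f_i$ (note $y_i$ has only two $H_i$-edges, not three), so its weight is $\phi^*(b_i)+2a=\phi^*(b_i)\neq 0$, since some coordinate of $\phi^*(b_i)$ equals $2$. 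No even label on $f_i$ can fix this: the two $H_i$-edges at $y_i$ must carry a label $c$ with $2c=\phi^*(b_i)$, which forces $c$ to be odd exactly in the coordinates where $\phi^*(b_i)=2$; that is, the smoothing edge must lie in the part of the factorization that receives the odd labels.

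The paper's proof makes exactly this correction and thereby dispenses with everything else: take a $2$-factor $M_r$ of $s(H_r)$ containing $y_r'y_r''$ (Theorem~\ref{MLK}(b), not (a)), assign $\frac{1}{2}\phi^*(b_r)$ to every edge of $M_r$ and $\phi^*(b_r)$ to every edge of its complement. Every vertex of $s(H_r)$ then has weight $2\cdot\frac{1}{2}\phi^*(b_r)+\phi^*(b_r)=2\phi^*(b_r)=0$, the weight at $y_r$ is likewise $\phi^*(b_r)+2\cdot\frac{1}{2}\phi^*(b_r)=0$, and every label is nonzero because $\frac{1}{2}\phi^*(b_r)$ has a coordinate equal to $1$. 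In particular, the $P_i$/$C_i$ machinery you import from Theorem~\ref{SSS} is unnecessary here and actively harmful: you would have to superimpose the odd-cycle labels on the $H_i$-labeling along shared edges, and you never specify how the two are reconciled. As written, the proposal assembles the right tools but leaves the decisive step unproved and, in the form actually stated, false.
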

\begin{proof}  It suffices to show that $G$ is zero-sum $\mathbb{Z}_4^3$-magic where $G$ is connected and cubic with at least 3 bridges. 

Define $H_0, H_1, ..., H_s, b_0, b_1, b_2, ..., b_s$, $G^*$, and $G'$ as given in the proof of Theorem \ref{SSS}. (Recall that $H_0, H_1, ..., H_s$ are components of $G - B(G)$ that are represented by the leaves of $T_G$.) Then by Theorem \ref{JJJ} and multiplication by 2 (base 10), there exists a zero-sum $\mathbb{Z}_4^3$-magic labeling $\phi'$ of $G'$.  Let $\phi^*$ denote an edge labeling of $G^*$ such that

: if $e$ is an edge common to $G^*$ and $G'$, then $\phi^*(e) = \phi'(e)$;

: if $e = b_i$, then $\phi^*(e) = \phi'(b_i')$. 

\noindent It is clear that except for each $y_i$ of $G^*$, each vertex of $G^*$ has incident edges whose labels under $\phi^*$ sum to 0. Moreover, for every edge $e$ of $G^*$, each coordinate of $\phi^*(e)$ is 0 or 2, with at least one coordinate equalling 2.

Select arbitrary $r$, $0 \leq r \leq s$. Then $H_r$ is  2-edge-connected and has precisely one vertex $y_r$ of degree 2. Let the neighbors of $y_r$ in $H_r$ be the necessarily distinct vertices $y_r'$ and $y_r''$. Then by smoothing $y_r$, we create a 2-edge-connected cubic graph $s(H_r)$ (possibly with 2 or 3 distinct  edges incident to $y_r'$ and $y_r''$). Thus, by Theorem \ref{MLK}, $s(H_r)$ has a 2-factor $M_r$ containing $y_r'y_r''$. Appealing to the evennesss of the coordinates of $\phi^*(b_r)$, we produce a zero-sum $\mathbb{Z}_4^3$-magic labeling ${\hat \phi}_r$ of $s(H_r)$ by having ${\hat \phi}_r$ assign $\frac{1}{2}\phi^*(b_r)$ to each edge in $M_r$ and $\phi^*(b_r)$ to each edge in $E\big(s(H_r)\big) - M_r$. 

It is now easy to construct a zero-sum $\mathbb{Z}_4^3$-magic labeling of $G$:

for $e \in E(G^*) \bigcap E(G)$, $\phi(e) = \phi^*(G^*)$;

for $e \in E(H_r) \bigcap E\big(s(H_r)\big)$, $\phi(e) = {\hat \phi}_r(e)$;

for $e = y_r'y_r$ or $e = y_r''y_r$, $\phi(e) = {\hat \phi}_r(y_r'y_r'')$.  
\end{proof}
By Theorem \ref{PPD},  each    cubic graph falls into one of three categories: for $1 \leq i \leq 3$, Category $i$ represents the   cubic graphs $G$ such that  $\zeta_4(G) = i$. Our unsuccessful search for graphs of Category 3 among   cubic graphs with necessarily no 1-factor   led us to conjecture the emptiness of that category. We devote the remainder of this section to verifying that conjecture by proving that all cubic graphs are zero-sum $\mathbb{Z}_4^2$-magic.

\begin{lemma} \label{QQQ} Let $k$ denote a positive integer and let $G$ denote a connected graph with maximum degree 3 and minimum degree 2 such that the set $S$ of vertices of $G$ with degree 2 has cardinality $2k$. 
Then  there exist $k$ pairwise vertex-disjoint paths $P_1, P_2, ..., P_k$ in $G$ whose terminal vertices comprise $S$. 
\end{lemma}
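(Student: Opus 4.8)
The plan is to recast the desired collection of paths as a single spanning subgraph with prescribed vertex degrees, and then to produce that subgraph as a minimal parity‑subgraph. First I would observe that a family of $k$ pairwise vertex‑disjoint paths in $G$ whose terminal vertices comprise $S$ is precisely the same data as an edge set $F\subseteq E(G)$ for which $\deg_F(v)=1$ for every $v\in S$ and $\deg_F(v)\in\{0,2\}$ for every $v\in V(G)\setminus S$, where $\deg_F$ denotes degree in the spanning subgraph $(V(G),F)$. Indeed, given such an $F$, the fact that every degree in $(V(G),F)$ is at most $2$ means each component is an isolated vertex or a path; a path end has degree $1$ in $F$, hence lies in $S$, and conversely every vertex of $S$ has degree $1$ and so is a path end; since $|S|=2k$ this forces exactly $k$ nontrivial components, which are the paths we want.

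The key observation is that such an $F$ is exactly an $S$-join (an edge set whose set of odd‑degree vertices is $S$) that happens to satisfy the pointwise bound $\deg_F\le\deg_G$ — and this bound will come for free. Since $G$ is connected and $|S|=2k$ is even, an $S$-join exists: pair up the vertices of $S$ arbitrarily into $k$ pairs, join each pair by an arbitrary path in $G$, and let $F_0$ be the symmetric difference of the edge sets of these $k$ paths; since degrees in a symmetric difference are congruent mod $2$ to the sums of the individual degrees, the set of odd‑degree vertices of $(V(G),F_0)$ is exactly $S$. Now choose, among all $S$-joins, one with the fewest edges, call it $F$.

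It remains to check that this minimal $F$ automatically has the two required properties. First, $F$ is acyclic: removing the edges of a cycle changes every vertex degree by $0$ or $2$ and hence preserves all degree parities, contradicting minimality. Second, $\deg_F(v)\le\deg_G(v)\le 3$ for every $v$; combining this with parity, a vertex $v\in S$ has $\deg_F(v)$ odd and $\deg_F(v)\le\deg_G(v)=2$, so $\deg_F(v)=1$, while a vertex $v\notin S$ has $\deg_F(v)$ even and $\deg_F(v)\le 3$, so $\deg_F(v)\in\{0,2\}$. Thus $(V(G),F)$ has maximum degree at most $2$ and contains no cycle, so it is a disjoint union of isolated vertices and paths whose ends are precisely the $2k$ vertices of $S$; its $k$ nontrivial components are $P_1,\dots,P_k$.

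I expect the only genuine subtlety here — and the reason a direct induction is awkward — is that one cannot simply peel off one admissible path and recurse on $G$ minus its vertices, because deleting the interior vertices of a path lowers the degrees of their neighbours and so destroys the hypothesis that all degrees are $2$ or $3$. The parity‑subgraph formulation avoids this: the degree hypothesis is invoked only once, globally, at the very last step, where it is exactly the inequality $\deg_F\le\deg_G\le 3$ that forces the minimal $S$-join to be a linear forest with ends $S$.
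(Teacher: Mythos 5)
Your proof is correct, and it takes a genuinely different route from the paper's. The paper argues by induction on the number of paths: at each stage it fixes a family of $h_0$ pairwise disjoint paths of minimum total length with ends in $S$, joins two further degree-$2$ vertices $x,y$ by a path $Q$, and analyzes the graph obtained by cancelling the edges common to $Q$ and the existing paths, using the maximum-degree-$3$ hypothesis to show that every intersection of $Q$ with a path forces a shared edge, so that the resulting graph has degree-$1$ vertices exactly at the $2h_0+2$ chosen vertices of $S$ and hence decomposes into cycles plus $h_0+1$ suitable paths. Your argument compresses this into a single global step: a minimum-cardinality $S$-join $F$ is acyclic, and since $\deg_F(v)\le\deg_G(v)\le 3$, parity forces $\deg_F(v)=1$ on $S$ and $\deg_F(v)\in\{0,2\}$ off $S$, so $(V(G),F)$ is the desired linear forest with $k$ nontrivial components. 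Both proofs rest on the same two ingredients --- an extremal (minimality) choice plus the degree bound --- but yours replaces the induction and the somewhat delicate case analysis of how $Q$ meets the existing paths (a point the paper treats tersely) with a standard parity-subgraph argument; indeed, in your formulation even the acyclicity step is dispensable, since any cycle components of a maximum-degree-$2$ subgraph could simply be discarded, the path components already having ends comprising $S$. The trade-off is that the paper's construction is self-contained and explicit about how the paths grow, whereas yours is shorter, cleaner, and makes transparent exactly where the hypothesis $\Delta(G)\le 3$ is used.
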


\begin{proof} By induction, we show that  for each $h$, $1 \leq h \leq {k}$, there exist $h$  pairwise vertex-disjoint paths $P_1, P_2, ..., P_h$ in $G$ whose terminal vertices comprise a subset of $S$ with cardinality $2h$. 

The base case $h=1$ is clearly established by the connectedness of $G$.   Thus, suppose $h_0$ is an integer, $1 \leq h_0 < k$, such that there exist $h_0$  pairwise vertex-disjoint paths  in $G$ whose terminal vertices comprise a subset of $S$ with cardinality $2h_0$.  Then there exists  ${\cal P}$, a set of $h_0$  pairwise vertex-disjoint paths  $P_1, P_2, ..., P_{h_0}$  whose terminal vertices comprise a subset of $S$ with cardinality $2h_0$  and whose lengths have minimum sum. Let $x$ and $y$ be distinct vertices of degree two in $G$, neither of which is a terminal vertex  of $P_i$, $1 \leq i \leq h_0$. We observe that neither $x$ nor $y$ is incident to some path in ${\cal P}$, since otherwise  the minimality of the sum of the lengths of the paths in ${\cal P}$ is violated.

By the connectedness of $G$, there exists a path $Q$ in $G$ from $x $ to $y$. Consider the graph $G_1$ given by $Q \bigcup  P_1 \bigcup P_2 ... \bigcup P_{h_0}$. Since $G$ has maximum degree 3, it follows that if $Q$ intersects $P_i$ at an interior vertex $v$ of $P_i$, then $Q$ and $P_i$ also share an edge incident to $v$. Similarly, since each terminal vertex  of $P_i$ has degree 2 in $G$, it follows that if $Q$ intersects $P_i$ at a terminal vertex $v$ of $P_i$, then $Q$ and $P_i$ also share an edge incident to $v$. Let $G_2$ be the graph that results by deleting each edge that is common to $Q$ and some $P_i$. We observe that  in $G_2$, all vertices  have degree 2  except $x, y,$ and the terminal points  of the paths in ${\cal P}$ (which have degree 1). 
Thus, $G_2$ consists of components, each of which is either a cycle or a path. Since there are precisely $2h_0+2$ vertices of degree 1 in $G_2$, $h_0+1$ of those components are  pairwise vertex-disjoint paths in $G$ whose terminal vertices comprise a subset of $S$ with cardinality $2h_0 + 2$. 
\end{proof}

A collection of pairwise vertex-disjoint paths  whose existence is guaranteed by Lemma \ref{QQQ} shall be called a {\it threading of G}.

\begin{theorem}\label{PPP} Suppose  $G$ is a  cubic graph. If $G$ has a 1-factor, then $\zeta_4(G) = 1$. Otherwise,   $\zeta_4(G) = 2$. 
\end{theorem}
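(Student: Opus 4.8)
The plan is to prove both directions separately, using Theorem~\ref{AAA} for the "if" part and the threading machinery (Lemma~\ref{QQQ}) together with the martini-graph construction and the 2-factor results for cubic graphs for the "otherwise" part. First, if $G$ has a 1-factor, then by Theorem~\ref{AAA} we have $zim(G) = \mathbb{N} - \{2\}$, so in particular $4 \in zim(G)$, i.e.\ $G$ is zero-sum $\mathbb{Z}_4$-magic; hence $\zeta_4(G) = 1$. (One must also observe $\zeta_4(G) \neq 0$ trivially since $k$ is a positive integer, so this is immediate.) Conversely, if $G$ has no 1-factor, then $4 \notin zim(G)$, so $G$ is \emph{not} zero-sum $\mathbb{Z}_4$-magic, giving $\zeta_4(G) \geq 2$; the entire content of the theorem is then to show $\zeta_4(G) \leq 2$, i.e.\ that every cubic graph (with no 1-factor, though this hypothesis will not actually be needed) is zero-sum $\mathbb{Z}_4^2$-magic.

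For the upper bound $\zeta_4(G)\le 2$, I would work component by component, so assume $G$ connected and cubic with at least $3$ bridges (fewer bridges being handled already by Theorem~\ref{STU} and Theorem~\ref{AAA}). The strategy refines the proof of Theorem~\ref{PPD}: there a $\mathbb{Z}_4^3$-magic labeling was built by putting a $\mathbb{Z}_4^3$-magic labeling from Theorem~\ref{JJJ} on the ``spine'' graph $G'$ (obtained by identifying the leaf-vertices $y_0,\dots,y_s$) and then patching each leaf block $H_i$ with a $2$-factor labeling. The key economy is that the block $G'$ (or $G^*$) is actually $2$-edge-connected and cubic \emph{except} for the degree-$2$ vertices $y_0,\dots,y_s$, and the set of such vertices, once the $y_i$ are not identified, may have even cardinality after suitable bookkeeping — this is exactly where Lemma~\ref{QQQ} enters. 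Concretely: in $G^*$ (the graph where the leaf blocks are deleted down to their attachment vertices $y_i$), the degree-$2$ vertices are precisely $y_0,\dots,y_s$ together with possible degree-$2$ vertices already present in the non-leaf blocks; since $G$ is cubic, a parity count forces the total number of degree-$2$ vertices of $G^*$ (equivalently, of the graph obtained after smoothing) to be even, so Lemma~\ref{QQQ} yields a threading: vertex-disjoint paths $P_1,\dots,P_k$ whose endpoints exhaust this set. Assigning the label $2$ to every edge of every threading path makes every interior path-vertex have weight $0$ and shifts weight only at the endpoints; combining this ``$2$ along a threading'' labeling (in one $\mathbb{Z}_4$-coordinate) with a $\mathbb{Z}_4$-magic labeling of a derived cubic graph from Theorem~\ref{AAA} or a $2$-factor labeling from Theorem~\ref{MLK}(b) (in the other coordinate) should close every vertex to weight $0$ using only $2$ coordinates rather than $3$.

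The main obstacle is the bookkeeping that makes the two $\mathbb{Z}_4$-coordinates suffice simultaneously: one coordinate must absorb all the bridge labels (which by Theorem~\ref{YYY} are forced to be even, hence $2$, in each coordinate, since a $\mathbb{Z}_4$-magic labeling restricted to a coordinate is $\mathbb{Z}_4$-magic) and propagate them through the threading paths and the leaf blocks, while the other coordinate handles the internal $2$-edge-connected structure; I must check that each leaf block $H_i$, which is $2$-edge-connected with exactly one degree-$2$ vertex, admits in the relevant coordinate a labeling that closes up given the forced even value on its bridge — this is precisely the smoothing-plus-$2$-factor argument from Theorem~\ref{PPD} (using Theorem~\ref{MLK}(b)), and it only consumes one coordinate, not three, because the ``other'' structure of $H_i$ can be threaded. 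The delicate point is ensuring the threading paths in $G^*$ can be chosen to interact correctly with the bridges (each bridge's forced label $2$ in the first coordinate should be realized by routing a threading path through it, or equivalently by choosing the threading of the appropriate auxiliary graph so that bridges lie on threading paths); handling the tree structure $T_G$ of bridges so that these local choices are globally consistent is the real work, and I would organize it by processing $T_G$ from its leaves inward exactly as in the proof of Theorem~\ref{SSS}, but with the threading replacing one of the three $\mathbb{Z}_2^3$-coordinates.
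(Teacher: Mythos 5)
Your treatment of the easy parts is fine: Theorem~\ref{AAA} gives $\zeta_4(G)=1$ when a 1-factor exists and gives the lower bound $\zeta_4(G)\geq 2$ otherwise, and the reduction to connected cubic graphs with at least three bridges matches the paper. You have also assembled the right ingredients for the hard part (Theorem~\ref{YYY} forcing even bridge coordinates, threadings from Lemma~\ref{QQQ}, 1- and 2-factors of the smoothed blocks, and an inward traversal of $T_G$). But the actual construction of a zero-sum $\mathbb{Z}_4^2$-magic labeling is deferred to ``the real work,'' and two of the claims you lean on in the sketch are false, so the gap is genuine rather than cosmetic.

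First, you assert that a parity count forces the number of degree-2 vertices available for threading to be even. Only the total number of bridge-endpoints, $2\vert B(G)\vert$, is even; an individual component of $G-B(G)$ can meet an odd number $k\geq 3$ of bridges, and Lemma~\ref{QQQ} must be applied per component (it requires a connected graph of minimum degree 2). The paper handles this by first smoothing one designated degree-2 vertex of such a component to restore even parity before threading, and this step cannot be skipped. Second, you implicitly route all bridges so that each carries the label $2$ in one fixed coordinate. That is impossible: at a component of $G-B(G)$ isomorphic to $K_1$ (which occurs precisely in the 1-factorless graphs at issue, e.g.\ the martini-glass constructions), the three incident bridges must have labels summing to $(0,0)$, which forces the multiset $\{(2,0),(0,2),(2,2)\}$ and hence different ``active'' coordinates on different bridges. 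This is exactly why the paper's proof propagates labels outward from a leaf of $T_G$ with a case analysis on whether the incoming bridge carries $(2,0)$, $(0,2)$, or $(2,2)$, and on whether the component is $K_1$, a cycle, or a Type III block with $k$ even, $k=1$, or $k\geq 3$ odd. Your outline contains no mechanism for keeping these local choices globally consistent, and that consistency argument is the substance of the theorem.
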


\begin{proof} By Theorem \ref{AAA} and previous discussion, it suffices to show that if $G$ is connected with at least 3 bridges, then $G$ is zero-sum $\mathbb{Z}_4^2$-magic.  

Denote the components of $G - B(G)$ by $H_0, H_1, H_2, ..., H_b$, where $b \geq 3$ is necessarily the number of bridges of $G$.  Then the vertex set of the tree $T_G$ is $\{h_0, h_1, h_2, ..., h_b\}.$  

We proceed by constructing a zero-sum $\mathbb{Z}_4^2$-magic labeling $\phi$ of $G$. Note that throughout the construction, each bridge is assigned a label from among $(2,2), (2,0)$, and $(0,2)$ as required by Theorem  \ref{YYY}.  

\begin{enumerate}

\item[1.] With no loss of generality, assume vertex  $h_0$ is a leaf of $T_G$ and that the  bridge  of $G$ that is incident to  component $H_0$ is $b_0 = x_0v_0$ where $v_0 \in V(H_0)$. Then $v_0$ is the only vertex of degree $2$ of $H_0$, whose necessarily distinct neighbors (in $H_0$) we denote by $v_0'$ and $v_0''$. Since $s(H_0)$ is a $2$-connected cubic graph, there exists a $2$-factor $M_0$ of $s(H_0)$ containing the edge $v_0'v_0''$ that is induced by smoothing $v_0$.  Let $f_0$ be an edge labeling of $s(H_0)$ such that  each edge of $M_0$ is given label $1$ and all edges of $E\big(s(H_0)\big) - M_0$ are given label $2$. Necessarily, $f_0$ is a zero-sum $\mathbb{Z}_4$-magic labeling of $s(H_0)$. We  commence the construction of our desired zero-sum $\mathbb{Z}_4^2$-magic labeling $\phi$ of $G$ by assigning labels to $E(H_0) \bigcup \{b_0\}$ as follows: 

\[ \phi(e) = \begin{cases}   \big(f_0(e),0 \big) & \text{if    $e \in E\big(s(H_0)\big) \cap E(H_0)$ } \\ (1,0) & \text{if $e = v_0v_0'$ or $v_0v''_0$} \\ (2,0) &\text{if   $e = b_0$}  \end{cases}\]

\noindent Observe that for each vertex $v$ of degree 3 in the subgraph of $G$ induced by $E(H_0) \bigcup \{b_0\}$, the sum of the labels under $\phi$  of the edges incident to $v$ is 0.

\item[2.] Until every edge in $E(G)$ is labeled under $\phi$, do the following:

Select a component $H_j$ such that (1) the edges of $H_j$ are as yet unlabeled under $\phi$, and (2) some  bridge of $G$  is labeled under $\phi$ and is incident to  $H_j$.  Let  $b_{j,1}, b_{j,2}, \dots, b_{j, k}$ be the bridges of $G$ that are incident to $H_j$, and with no loss of generality,  suppose that $b_{j,1}$ is the necessarily unique bridge among them that is labeled under $\phi$. 

The component $H_j$ is characterized by one of the following:

a) $H_j$ is isomorphic to $K_1$;

b) $H_j$ is isomorphic to the $k$-cycle $C_k$, $k \geq 2$; 

c) $H_j$  has minimum degree 2 and maximum degree 3. 

Suppose a) holds. If $\phi(b_{j,1}) = (2,0)$ or $(0,2)$ or $(2,2)$, it is an easy matter to extend $\phi$ in such a way that under $\phi$, the bridges $b_{j,2}$ and $b_{j,3}$, each incident to the sole vertex $v_{j,1}$ of $H_j$,  are assigned  labels from  among $(0,2), (2,0), (2,2)$ so that  $\sum_{i=1}^3 \phi(b_{j,i}) = 0$. 

Suppose b)  holds. If $\phi(b_{j,1}) = (2,0)$ or $(0,2)$ or $(2,2)$, it is an easy matter to extend $\phi$ in such a way that for each vertex $v_{j,i}$ of the cycle, its three incident edges (which include one bridge of $G$) are assigned   labels from  among $(0,2), (2,0), (2,2)$ so that the sum of the labels of the edges incident to $v_{j,i}$ is 0.  

Now assume that c) holds. Let $v_{j,1},\dots, v_{j,k}$ denote the (necessarily distinct) vertices of degree $2$ in $H_j$ such that  the bridge $b_{j,i}$ of $G$ is incident to $v_{j,i}$. Also  let $v_{j,i}'$ and $v_{j,i}''$ be the (necessarily distinct)  neighbors of $v_{j,i}$.
\begin{enumerate}
\item[(i)] Suppose $k$ is even. We proceed by considering the label assigned to $b_{j,1}$.
\begin{itemize}
\item If $\phi(b_{j,1}) = (2,0) $ or $(0,2)$:

Since $s(H_j)$ is a $2$-connected cubic graph, there exists a 1-factor $M_j$ of $s(H_j)$ containing the edge of the form  $v_{j,1}'v_{j,1}''$. Let $f_j$ be an edge-labeling of $s(H_j)$ where each edge of $M_j$ is given label $2$ and all edges of $E\big(s(H_j)\big) - M_j$ are given label $1$. Necessarily, $f_j$ is a zero-sum $\mathbb{Z}_4$-magic labeling of $s(H_j)$. 

Next, choose a threading $Th(j)$ of $H_j$. Let $g_j$ be an edge-labeling of $H_j$ such that $g_j$ assigns 2 to each edge  that lies along some path of  $Th(j)$ and 0 to each edge that lies along no path of $Th(j)$.

If $\phi(b_{j,1}) = (2,0)$,  extend $\phi$ to the edges in $E(H_j) \bigcup \{b_{j,2}, ..., b_{j,k}\}$ as follows:

\[ \phi(e) = \begin{cases} \big(g_j(e), f_j(v_{j,i}'v_{j,i}'') \big) & \text{if $e = v_{j,i}v_{j,i}'$ or $e=v_{j,i}v_{j,i}''$, $1 \le i \le k$}\\  \big(g_j(e), f_j(e) \big) & \text{if $e \in s(H_j)$}  \\ (2,0) & \text{if $e = b_{j,i}$ and $f_j(v_{j,i}'v_{j,i}'') = 2$, $2 \le i \le k$}\\ (2,2) & \text{if $e = b_{j,i}$ and $f_j(v_{j,i}'v_{j,i}'') = 1$, $2 \le i \le k$}

\end{cases}\]

If $\phi(b_{j,1}) = (0,2)$,  extend $\phi$ to the edges in $E(H_j) \bigcup \{b_{j,2}, ..., b_{j,k}\}$ as follows:
\[ \phi(e) = \begin{cases} \big( f_j(v_{j,i}'v_{j,i}''), g_j(e)\big) & \text{if $e = v_{j,i}v_{j,i}'$ or $e=v_{j,i}v_{j,i}''$, $1 \le i \le k$}\\  \big(f_j(e), g_j(e) \big) & \text{if $e \in s(H_j)$}  \\ (0,2) & \text{if $e = b_{j,i}$ and $f_j(v_{j,i}'v_{j,i}'') = 2$, $2 \le i \le k$}\\ (2,2) & \text{if $e = b_{j,i}$ and $f_j(v_{j,i}'v_{j,i}'') = 1$, $2 \le i \le k$} \end{cases}\]

\item If $\phi(b_{j,1}) = (2,2)$:  

Since $s(H_j)$ is a $2$-connected cubic graph, there exists a $2$-factor $M_j$ of $s(H_j)$ containing the edge of the form  $v_{j,1}'v_{j,1}''$. Let $f_j$ be an edge-labeling of $s(H_j)$ such that each edge of $M_j$ is given label $1$ and all edges of $E\big(s(H_j)\big) - M_j$ are given label $2$. Necessarily, $f_j$ is a zero-sum $\mathbb{Z}_4$-magic labeling of $s(H_j)$. 

Next, choose a threading $Th(j)$ of $H_j$. Let $g_j$ be an edge-labeling of $H_j$ such that $g_j$ assigns 2 to each edge  that lies along some path of  $Th(j)$ and 0 to each edge that lies along no path of $Th(j)$.
Extend $\phi$ to the edges in $E(H_j) \bigcup \{b_{j,2}, ..., b_{j,k}\}$ as follows:
\[ \phi(e) = \begin{cases} \big(f_j(v_{j,i}'v_{j,i}''), g(e)\big) & \text{if $e= v_{j,i}v_{j,i}'$ or $e=v_{j,i}v_{j,i}''$, $1 \le i \le k$}\\   \big(f_j(e), g(e)\big) & \text{if $e \in  E\big(s(H_j)\big)$} \\ (0,2) & \text{if $e = b_{j,i}$ and $f_j(v_{j,i}'v_{j,i}'') = 2$, $2 \le i \le k$}\\ (2,2) & \text{if $e = b_{j,i}$ and $f_j(v_{j,i}'v_{j,i}'') = 1$, $2 \le i \le k$}\end{cases}\]
\end{itemize}

\item[(ii)] Suppose $k = 1$.  Let $M_j$ be a 2-factor of $s(H_j)$ that contains the edge of the  form $v_{j,1}'v_{j,1}''$, and let $f_j$ be an edge labeling of $s(H_j)$ such that $f_j$ assigns 1 to each edge in $M_j$ and 2 to each edge in $E\big(s(H_j)\big) - M_j$. Then $f_j$ is necessarily a zero-sum $\mathbb{Z}_4$-magic labeling of $s(H_j)$, and we extend $\phi$ to $E(H_j)$ according to $\phi(b_{j,1})$ as follows: 

If $\phi(b_{j,1}) = (2,0)$, 

\[ \phi(e) = \begin{cases} \big(f_j(v_{j,1}'v_{j,1}''), 0\big) & \text{if $e = v_{j,1}v_{j,1}'$ or $e=v_{j,1}v_{j,1}''$} \\ \big(f_j(e), 0\big) & \text{if $e \in s(H_j)$}   \end{cases}\]

If $\phi(b_{j,1}) = (0,2)$, 

\[ \phi(e) = \begin{cases}\big(0,  f_j(v_{j,1}'v_{j,1}'')\big) & \text{if $e = v_{j,1}v_{j,1}'$ or $e=v_{j,1}v_{j,1}''$} \\ \big(0, f_j(e)\big) & \text{if $e \in s(H_j)$}   \end{cases}\]

If  $\phi(b_{j,1}) = (2,2)$,

\[ \phi(e) = \begin{cases}\big(f_j(v_{j,1}'v_{j,1}''), f_j(v_{j,1}'v_{j,1}'')\big) & \text{if $e = v_{j,1}v_{j,1}'$ or $e=v_{j,1}v_{j,1}''$} \\ \big(f_j(e), f_j(e)\big) & \text{if $e \in s(H_j)$}   \end{cases}\]

\item[(iii)] Suppose $k \geq 3$ is odd. We proceed by considering the label assigned to $b_{j,1}$.
\begin{itemize}
\item If $\phi(b_{j,1}) = (2,2)$:

Since $s(H_j)$ is a $2$-edge-connected cubic graph,  Theorem \ref{ABC} guarantees the existence of  a $2$-factor $M_j$ containing the (not necessarily distinct) edges $v_{j,1}'v_{j,1}''$ and $v_{j,2}'v_{j,2}''$. Let $f_j$ be an edge-labeling of $s(H_j)$ such that  each edge of $M_j$ is given label $1$ and all edges of $E\big(s(H_j)\big) - M_j$ are given label $2$. Necessarily, $f_j$ is a zero-sum $\mathbb{Z}_4$-magic labeling of $s(H_j)$.
Observing that $s_{H_j}(v_{j,2})$ (the graph that results by smoothing $v_{j,2}$ of $H_j$) has an even number of vertices of degree 2,  choose a threading $Th(j)$ of $s_{H_j}(v_{j,2})$. Let $g_j$ be an edge-labeling of $H_j$ given by 
\[g_j(e) = \begin{cases} 2 & \text{if $e \in Th(j) - \{v_{j,2}'v_{j,2}''\}$}\\ 2 & \text{if $e = v_{j,2}v_{j,2}'$ or $e=v_{j,2}v_{j,2}''$ and $v_{j,2}'v_{j,2}'' \in Th(j)$}\\ 0 & \text{otherwise}\end{cases}.\]

Now extend $\phi$ to the edges in $E(H_j) \bigcup \{b_{j,2}, ..., b_{j,k}\}$ as follows:
\[\phi(e) = \begin{cases} \big(f_j(v_{j,i}'v_{j,i}''), g_j(e)\big) & \text{if $e=v_{j,i}v_{j,i}'$ or $e=v_{j,i}v_{j,i}''$, $1 \le i \le k$}\\ \big(f_j(e), g_j(e)\big) & \text{if $e \in s(H_j)$} \\  (2,0) & \text{if $e = b_{j,2}$} \\(0,2) & \text{if $e = b_{j,i}$ and $f_j(v_{j,i}'v_{j,i}'')=2$, $3 \le i \le k$}\\ 
(2,2) & \text{if $e = b_{j,i}$ and $f_j(v_{j,i}'v_{j,i}'') = 1$, $3 \le i \le k$ } \\

\end{cases}.\]
\item If $b_{j,1}$ is assigned label $(2,0)$ or $(0,2)$:

Since $s(H_j)$ is a $2$-connected cubic graph, there exists a $2$-factor $M_j$ containing the edge $v_{j,1}'v_{j,1}''$. Let $f_j$ be an edge-labeling of $s(H_j)$ where each edge of $M_j$ is given label $1$ and all edges of $E\big(s(H_j)\big)- M_j$ are given label $2$. Necessarily, $f_j$ is a zero-sum $\mathbb{Z}_4$-magic labeling of $s(H_j)$. 

Observing that $s_{H_j}(v_{j,1})$  has an even number of vertices of degree 2,  choose a threading $Th(j)$ of $s_{H_j}(v_{j,1})$. Let $g_j$ be an edge-labeling of $H_j$ given by 
\[g_j(e) = \begin{cases} 2 & \text{if $e\in Th(j)- \{v_{j,1}'v_{j,1}''\}$}\\ 2 & \text{if $e=v_{j,1}v_{j,1}'$ or $e=v_{j,1}v_{j,1}''$ and $v_{j,1}'v_{j,1}'' \in Th(j)$}\\ 0 & \text{otherwise}\end{cases}.\]

If $b_{j,1}$ is assigned the label $(2,0)$, we extend $\phi$ to the edges in $E(H_j) \bigcup \{b_{j,2}, ..., b_{j,k}\}$ as follows:
\[\phi(e) = \begin{cases} (f_j(v_{j,i}'v_{j,i}''), g_j(e)) & \text{if $e=v_{j,i}v_{j,i}'$ or $v=v_{j,i}v_{j,i}''$, $1 \le i \le k$}\\ (f_J(e), g_j(e)) & \text{if $e \in s(H_j)$}\\
(2,2) & \text{if $e = b_{j,i}$ and $f_j(v_{j,i}'v_{j,i}'') = 1$,$2 \le i \le k$ }\\
(0,2) & \text{if $e = b_{j,i}$ and $f_j(v_{j,i}'v_{j,i}'') = 2$, $2 \le i \le k$ }\end{cases}\]

If $b_{j,1}$ is assigned the label $(0,2)$,  we extend $\phi$ to the edges in $E(H_j) \bigcup \{b_{j,2}, ..., b_{j,k}\}$ as follows:
\[\phi(e) = \begin{cases} \big(g_j(e), f_j(v_{j,i}'v_{j,i}'')\big) & \text{if $e=v_{j,i}v_{j,i}'$ or $e=v_{j,i}v_{j,i}''$, $1 \le i \le k$}\\ \big(g_j(e), f_j(e)\big) & \text{if $e \in s(H_j)$}\\
(2,2) & \text{if $e = b_{j,i}$ and $f_j(v_{j,i}'v_{j,i}'') = 1$,$2 \le i \le k$ }\\
(2,0) & \text{if $e = b_{j,i}$ and $f_j(v_{j,i}'v_{j,i}'') = 2$, $2 \le i \le k$}\end{cases}\]
\end{itemize}
\end{enumerate}
\end{enumerate}
\end{proof}
\section{On the zero-sum $\mathbb{A}$-magicness of cubic graphs.}

Noting that every finite non-trivial  abelian group  can be expressed as the direct product of powers of finite cyclic groups $\mathbb{Z}_j$, we close this paper with a consideration of the scope of such groups $\mathbb{A}$ for which a given  cubic graph $G$ is zero-sum $\mathbb{A}$-magic. It will suffice to assume that $G$ is connected. 

Since no connected cubic graph $G$ is zero-sum $\mathbb{Z}_2$-magic, and since $G$ is zero-sum $\mathbb{A}$-magic if the direct product representation of $G$ contains a factor of $\mathbb{Z}_j$ such that $G$ is zero-sum $\mathbb{Z}_j$-magic, we have the following:

a) If $G$ has a 1-factor and chromatic index 3, then $G$ is zero-sum $\mathbb{A}$-magic for precisely all finite non-trivial abelian groups $\mathbb{A}$ except $\mathbb{Z}_2$. (See Theorems \ref{AAA} and \ref{BBB}.)

b) If $G$ is 2-edge-connected with a  1-factor and  chromatic index 4, then $G$ is zero-sum $\mathbb{A}$-magic for precisely all finite non-trivial abelian groups $\mathbb{A}$ except $\mathbb{Z}_2$ and $\mathbb{Z}_2^2$. (See Theorems \ref{AAA}, \ref{JJJ}, and \ref{BBB}.)

c) If $G$ has a 1-factor and at least one bridge, then $G$ is zero-sum $\mathbb{A}$-magic for precisely all finite non-trivial abelian groups $\mathbb{A}$ except $\mathbb{A}$ isomorphic to $\mathbb{Z}_2^k$ for  $k \geq 1$. (See  Theorems \ref{AAA} and \ref{XXX}.)

d)  If $G$ has no 1-factor (and hence at least three  bridges) and  is not zero-sum $\mathbb{A}$-magic, then  $\mathbb{A}$ must be of the form  $\mathbb{Z}_4$, $\mathbb{Z}_2^k$ for  $k \geq 1$, or  $\mathbb{Z}_2^k \times \mathbb{Z}_4$ for $k$ in some subset of $\mathbb{N}$.  (See Theorems \ref{AAA}, \ref{XXX}, and \ref{PPP}.) Without further information about $G$, it is not clear for which $k$ $G$ is (or is not) zero-sum $\mathbb{Z}_2^k \times \mathbb{Z}_4$-magic.  The lemma below indicates that either $G$ is zero-sum $\mathbb{Z}_2^k \times \mathbb{Z}_4$-magic for no positive $k$ or $G$ is  zero-sum $\mathbb{Z}_2^k \times \mathbb{Z}_4$-magic for each $k \geq c$ where $c$ is some fixed integer at most 3.

\begin{lemma} \label{LEMMA}If $G$ is a connected cubic  zero-sum $\mathbb{Z}_2^{k_0} \times \mathbb{Z}_4$-magic graph for some fixed $k_0 \geq 1$, then $G$ is zero-sum  $\mathbb{Z}_2^k \times  \mathbb{Z}_4$-magic for all $k \geq min\{3, k_0\}$. 
\end{lemma}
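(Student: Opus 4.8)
The plan is to retain the $\mathbb{Z}_4$-coordinate of the given labeling and to rebuild its $\mathbb{Z}_2^{k_0}$-part from scratch as a $\mathbb{Z}_2^3$-labeling dictated by the block structure of $G$ rather than by the old coordinates. First, note that if $k\ge k_0$ then $\mathbb{Z}_2^{k_0}\times\mathbb{Z}_4$ is isomorphic to a non-trivial subgroup of $\mathbb{Z}_2^{k}\times\mathbb{Z}_4$, so $G$ is zero-sum $\mathbb{Z}_2^{k}\times\mathbb{Z}_4$-magic by the observation of Section 1 that zero-sum $\mathbb{H}$-magicness passes up to any group containing $\mathbb{H}$ as a non-trivial subgroup. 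Thus it suffices to prove that $G$ is zero-sum $\mathbb{Z}_2^{3}\times\mathbb{Z}_4$-magic; invoking that observation once more then yields zero-sum $\mathbb{Z}_2^{k}\times\mathbb{Z}_4$-magicness for every $k\ge 3$ together with every $k\ge k_0$, i.e.\ for every $k\ge\min\{3,k_0\}$.

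So let $\phi$ be a zero-sum $\mathbb{Z}_2^{k_0}\times\mathbb{Z}_4$-magic labeling of $G$, written $\phi(e)=(\mathbf{a}_e,c_e)$ with $\mathbf{a}_e\in\mathbb{Z}_2^{k_0}$ and $c_e\in\mathbb{Z}_4$, and set $E_0=\{e\in E(G):c_e=0\}$. By Theorem \ref{YYY}, for every bridge $b$ of $G$ each coordinate of $\phi(b)$ is even; this forces $\mathbf{a}_b=0$ and $c_b\in\{0,2\}$, and since $\phi(b)\ne 0$ we get $c_b=2$, so $E_0$ contains no bridge. Hence $E_0\subseteq E(G)-B(G)$, and every edge of $E_0$ lies in one of the components $H_0,\dots,H_b$ of $G-B(G)$. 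Every component $H_i$ that has an edge is $2$-edge-connected (it is of Type II or Type III), so by Theorem \ref{JJJ} together with the subgroup observation it is zero-sum $\mathbb{Z}_2^{3}$-magic; reading the three coordinate functions of such a labeling of $H_i$ off as indicator functions produces even subgraphs $C^{(i)}_1,C^{(i)}_2,C^{(i)}_3$ of $H_i$ with $C^{(i)}_1\cup C^{(i)}_2\cup C^{(i)}_3=E(H_i)$ (the zero-sum condition makes each coordinate the indicator of an even subgraph, and the nowhere-zero condition puts every edge into at least one of them). For $j\in\{1,2,3\}$ set $C_j=\bigcup_i C^{(i)}_j$; because no bridge meets any $H_i$, each vertex of $G$ has the same degree in $C_j$ as in the $C^{(i)}_j$ of its own component, so $C_j$ is an even subgraph of $G$, and $C_1\cup C_2\cup C_3=E(G)-B(G)\supseteq E_0$. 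Let $\mathbf{a}'\colon E(G)\to\mathbb{Z}_2^3$ have $j$-th coordinate equal to the indicator of $C_j$; then $\mathbf{a}'$ is zero-sum and $\mathbf{a}'_e\ne 0$ whenever $e\in E_0$. The labeling $\phi'(e)=(\mathbf{a}'_e,c_e)$ into $\mathbb{Z}_2^3\times\mathbb{Z}_4$ is therefore zero-sum and nowhere zero (if $c_e\ne 0$ this is immediate, and if $c_e=0$ then $e\in E_0$, so $\mathbf{a}'_e\ne 0$), which is what we wanted.

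The step I expect to be the real obstacle is recognizing that the old $\mathbb{Z}_2^{k_0}$-part $\mathbf{a}$ must be discarded rather than compressed: a naive linear surjection $\mathbb{Z}_2^{k_0}\to\mathbb{Z}_2^3$ need not preserve nowhere-zeroness, since the set $\{\mathbf{a}_e:c_e=0\}$ can span far more than three dimensions and no kernel of such a map need avoid it. What makes the rebuilding work is precisely the rigidity supplied by Theorem \ref{YYY}: because the $\mathbb{Z}_4$-coordinate is pinned to $2$ on every bridge, its zero set $E_0$ is confined to the $2$-edge-connected blocks of $G$, and those are exactly the graphs to which Theorem \ref{JJJ} delivers a cover by three even subgraphs.
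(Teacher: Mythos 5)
Your proposal is correct and follows essentially the same route as the paper: both keep the $\mathbb{Z}_4$-coordinate of the given labeling (pinned to $2$ on bridges by Theorem \ref{YYY}), discard the old $\mathbb{Z}_2^{k_0}$-part, and replace it with a fresh zero-sum $\mathbb{Z}_2^3$-magic labeling of the $2$-edge-connected components of $G-B(G)$ supplied by Theorem \ref{JJJ}, set to $0$ on bridges. Your reformulation of that $\mathbb{Z}_2^3$-labeling as a cover by three even subgraphs is just a change of notation, not a different argument.
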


\begin{proof}  Since the claim is clearly true if $k_0 \leq 3$, we will  assume $k_0 \geq 4$.

If $G$ has no bridge, then $G$ is 2-edge-connected. Therefore $G$ is  zero-sum $\mathbb{Z}_2^3$-magic, and hence zero-sum $\mathbb{Z}_2^3 \times \mathbb{Z}_4$-magic. It thus suffices to assume that $G$ has at least one bridge.

Let   $\phi$ be  a   zero-sum $\mathbb{Z}_2^{k_0} \times \mathbb{Z}_4$-magic labeling of $G$, where the last coordinate  of each label in the image of $\phi$ represents the factor $\mathbb{Z}_4$.  We observe that each component $H_i$ of $G - B(G)$ is non-trivial,  since by Theorem  \ref{YYY}, the label of each bridge under $\phi$ must be the $(k_0 + 1)$-tuple $(0,0,0, ..., 0,2)$.  We further observe that $H_i$ is 2-edge-connected  and hence admits a zero-sum $\mathbb{Z}_2^3$-magic labeling $\phi_i$ by Theorem \ref{JJJ}. It is now easy to see that  $\phi'$ is a zero-sum $\mathbb{Z}_2^3 \times \mathbb{Z}_4$-magic labeling of $G$, where $\phi'$ is constructed from  $\phi$ and $\phi_i$ as follows:  For each edge $e$ in $E(G)$, let $\phi'(e) = (a,b,c,d)$ where

$d$ is the last coordinate of $\phi(e)$;

$a,b,c$ are each 0 if $e$ is a bridge of $G$;

$(a,b,c)$ respectively agree with the first 3 coordinates of $\phi_i(e)$ if $e \in E(H_i)$.
\end{proof}

\begin{theorem} \label{WWW} If $G$ is a connected cubic graph  such that $G - B(G)$ has a component that is either trivial or bipartite with an odd number of vertices of degree 2, then for every $k \geq 1$, G is not zero-sum $\mathbb{Z}_2^k \times \mathbb{Z}_4$-magic.
\end{theorem}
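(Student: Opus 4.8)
The plan is to argue by contradiction, combining the parity obstruction for bridges (Theorem \ref{YYY}) with a Type-analysis of the components of $G - B(G)$ and a counting argument internal to the offending component. Suppose $\phi$ is a zero-sum $\mathbb{Z}_2^k \times \mathbb{Z}_4$-magic labeling of $G$, and write the last coordinate of every label as the $\mathbb{Z}_4$-coordinate. By Theorem \ref{YYY}, the $\mathbb{Z}_4$-coordinate of the label of each bridge of $G$ lies in $\{0,2\}$; in fact, since $G$ is cubic and each bridge-endpoint $v$ in a component $H$ has weight $0$, the $\mathbb{Z}_4$-coordinate of $\phi(b)$ equals (mod $4$) the $\mathbb{Z}_4$-coordinate of $\sum_{e\in E(H)} 2\phi(e)$, hence is $\equiv 2\cdot(\text{something})$; the key refinement I will need is a congruence forcing $\phi(b)$'s $\mathbb{Z}_4$-coordinate to be $0$ exactly when the component has an even number of degree-2 vertices.

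First I would handle the trivial-component case: if $H_i \cong K_1$, then $G$ has a vertex of degree $1$, so $G$ is not zero-sum $\mathbb{A}$-magic for any finite non-trivial $\mathbb{A}$, and in particular not $\mathbb{Z}_2^k\times\mathbb{Z}_4$-magic — this is immediate and already noted in the paper.

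The substantive case is a component $H = H_i$ of $G - B(G)$ that is bipartite with parts $X, Y$ and with an odd number of degree-2 vertices. Here I would mimic the bipartite argument of Theorem \ref{RRR} but tracked coordinatewise in $\mathbb{Z}_4$: let $d$ be the number of degree-2 vertices of $H$ (each incident to exactly one bridge of $G$, since $H$ is a leaf-or-interior component of $T_G$ and every vertex of $G$ has degree $3$), and partition these into $d_X$ in $X$ and $d_Y$ in $Y$, so $d_X + d_Y = d$ is odd. Summing the weight-$0$ condition over all vertices of $X$ gives, in the $\mathbb{Z}_4$-coordinate, $2\sum_{e \in E(H_X)}\phi(e) + \sum_{b \text{ incident to } X}\phi(b) \equiv 0$, where $E(H_X)$ are the edges of $H$ with an endpoint in $X$ (all of $E(H)$, since $H$ is bipartite), and similarly for $Y$. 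Subtracting, the $2\sum_{e\in E(H)}\phi(e)$ terms cancel, leaving $\sum_{b \to X}\phi(b) - \sum_{b\to Y}\phi(b) \equiv 0 \pmod 4$ in the $\mathbb{Z}_4$-coordinate. But by Theorem \ref{YYY} each bridge's $\mathbb{Z}_4$-coordinate is in $\{0,2\}$, so this sum is $2(d_X - d_Y) \equiv 2d \pmod 4$ when all bridge labels are forced nonzero — which is where I need the refinement. The cleanest route: show each bridge incident to $H$ must carry $\mathbb{Z}_4$-coordinate $2$ (not $0$), because if it were $0$ then the $(k+1)$-tuple is a zero-sum $\mathbb{Z}_2^k$-magic labeling restricted, contradicting Theorem \ref{XXX} applied to the bridge; then $\sum_{b\to X}\phi(b) - \sum_{b\to Y}\phi(b) = 2d_X - 2d_Y = 2(d_X-d_Y) \equiv 2d \equiv 2 \pmod 4$ since $d$ is odd, contradicting $\equiv 0$.

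The step I expect to be the main obstacle is pinning down that the $\mathbb{Z}_4$-coordinate of each bridge label incident to the bipartite component is exactly $2$ rather than $0$ — i.e., importing Theorem \ref{XXX} (the bridge obstruction for $\mathbb{Z}_2^k$) correctly in the mixed-group setting. The slick argument is: if some bridge $b$ incident to $H$ had $\mathbb{Z}_4$-coordinate $0$, then deleting the $\mathbb{Z}_4$-coordinate entirely would not immediately help (other bridges may still use it), so instead I would argue directly on $H$ alone: $H$ is bipartite, and within $H$ the weight-$0$ conditions plus the (possibly-zero) incoming bridge labels would, by the $X$-versus-$Y$ subtraction above restricted to a single component's bridge set, still force the parity contradiction as long as at least one bridge is counted with an odd multiplicity discrepancy — and the oddness of $d$ guarantees $d_X \ne d_Y$, so $d_X - d_Y$ is odd, making $2(d_X - d_Y) \equiv 2 \pmod 4$ regardless of which individual bridge labels are $0$ or $2$, provided we also rule out the degenerate possibility that the $\mathbb{Z}_4$-coordinates could be $\{0,2\}$-valued in a way that cancels. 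Working this out carefully (the sum $\sum_{b\to X}c_b - \sum_{b\to Y}c_b$ with each $c_b\in\{0,2\}$ has the same parity in $\mathbb{Z}_4/\{0\}$-count as $2(d_X-d_Y)$ only if all $c_b=2$) is the crux, and it is exactly why one needs first to establish $c_b = 2$ for every bridge — which follows because $G$ cubic with a bridge $b$ forces, by Theorem \ref{YYY} and the weight equations, $c_b\equiv \sum 2\phi(e)$, and the bipartite structure makes this sum controllable; I would present this as a short lemma-style computation before the main contradiction.
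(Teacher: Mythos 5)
Your overall skeleton (force the $\mathbb{Z}_4$-coordinate of every bridge label to be $2$, then sum the weight equations over the two parts of the bipartite component and compare the parities of the degree-$2$ counts) is the paper's argument, but two steps are wrong or unfinished. First, the trivial-component case: a component of $G-B(G)$ isomorphic to $K_1$ does \emph{not} give a vertex of degree $1$ in $G$ --- $G$ is cubic, so such a component is a cut-vertex incident to three bridges (see the graph $G_2$ of Figure~4, which is cubic and has exactly such a component). Your one-line dismissal of this case is therefore invalid; it has to be handled by the same bridge-label computation as the bipartite case (three bridges each contributing $2$ in the $\mathbb{Z}_4$-coordinate sum to $6\equiv 2\not\equiv 0 \pmod 4$ at that vertex).

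Second, and more seriously, the step you yourself flag as ``the crux'' --- that the $\mathbb{Z}_4$-coordinate of every bridge label is exactly $2$ and never $0$ --- is never actually established. Your appeal to Theorem~\ref{XXX} does not work (knowing one coordinate of one edge label is $0$ does not produce a zero-sum $\mathbb{Z}_2^k$-labeling of anything, since other edges may be labeled $0$ in the remaining coordinates), and your closing remarks about ``the weight equations'' and ``the bipartite structure making the sum controllable'' are circular: you need the bridge labels pinned down \emph{before} the $X$-versus-$Y$ subtraction can yield a contradiction, as you yourself note. The missing observation is elementary and does not involve the bipartite structure at all: apply Theorem~\ref{YYY} coordinate by coordinate. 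For each $\mathbb{Z}_2$ factor one has $f(i)=1$, so the admissible set $\{0,2,\dots,2f(i)-2\}$ is just $\{0\}$, and every $\mathbb{Z}_2$-coordinate of a bridge label is forced to be $0$. Since an $\mathbb{A}$-labeling must avoid the identity of the group and the $\mathbb{Z}_4$-coordinate lies in $\{0,2\}$ by the same theorem, that coordinate must be $2$. With this in hand, your $X$-versus-$Y$ subtraction (whose spurious factor of $2$ on the internal-edge sum is harmless, since each edge of the bipartite component is counted once from each side and those terms cancel) gives $2d_X\equiv 2d_Y\pmod 4$, contradicting the oddness of $d_X+d_Y$, exactly as in the paper.
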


\begin{proof}  Suppose to the contrary that $k_0$ is a positive integer such that $G$ is zero-sum $\mathbb{Z}_2^{k_0} \times \mathbb{Z}_4$-magic. Then by the preceding lemma, there exists a   zero-sum $\mathbb{Z}_2^3 \times \mathbb{Z}_4$-magic labeling $\phi$ of $G$, where the last coordinate  of each label in the image of $\phi$ represents the factor $\mathbb{Z}_4$.

Under $\phi$, every bridge receives a label of $(0,0,0,2)$ by Theorem  \ref{YYY}, implying that $G-B(G)$ has no component isomorphic to $K_1$. Thus suppose that some component $H$ of $G-B(G)$ is bipartite with parts $X$ and $Y$  such that $X$ has an even number of vertices of degree 2 in $H$ and $Y$ has an odd number of vertices of degree 2 in $H$. Since each vertex in $X$ of degree 2 is incident to one bridge of $G$, the sum $s_X$ of the $4^{th}$ coordinates of the labels of the edges of $H$ that are incident to $X$ must be 0 mod 4. Similarly, since $Y$ has an odd number of vertices of degree 2 in $H$, the sum $s_Y$ of the $4^{th}$ coordinates  of the labels of the edges of $H$ that are incident to $Y$ must be 2 mod 4. But the bipartite assumption implies $s_X = s_Y$, giving our contradiction. 
\end{proof}

We observe that the graph $G_2$ in Figure 4 has no 1-factor and satisfies the hypotheses of Theorem \ref{WWW} by virtue of the existence of a trivial component.  Thus, by Theorems  \ref{AAA}, \ref{XXX},  and \ref{PPP}, $G_2$ is zero-sum $\mathbb{A}$-magic for precisely every finite non-trivial abelian group $\mathbb{A}$ except $\mathbb{Z}_4, \mathbb{Z}_2^k$ for $k \geq 1$, and $\mathbb{Z}_2^k \times \mathbb{Z}_4$ for $k \geq 1$.  Similarly, the graph $G_3$ in Figure 5 below  has no 1-factor and   satisfies the hypotheses of Theorem \ref{WWW} since $G_3 - B(G_3)$  has  a bipartite component isomorphic to $K_{2,3}$ with an odd number of vertices of degree 2. So it, too, is zero-sum $\mathbb{A}$-magic for precisely every finite non-trivial abelian group $\mathbb{A}$ except $\mathbb{Z}_4, \mathbb{Z}_2^k$ for $k \geq 1$, and $\mathbb{Z}_2^k \times \mathbb{Z}_4$ for $k \geq 1$.

\begin{figure}[h]
\begin{center}
\begin{tikzpicture}[]
\tikzstyle{vertex}=[circle, draw, inner sep=0pt, minimum size=6pt]
\tikzset{vertexStyle/.append style={rectangle}}
	\vertex (1) at (0,0) [fill, scale=.75] {};
	\vertex (2) at (1,0) [fill, scale=.75] {};
	\vertex (3) at (4,0) [fill, scale=.75]{};
	\vertex (4) at (2,-.94) [fill, scale=.75]{};
	\vertex (5) at (2, -2.2) [fill, scale=.75]{};
	\vertex (6) at (1.4, -3) [fill, scale=.75]{};
	\vertex (7) at (2.6, -3) [fill, scale=.75]{};
	\vertex (8) at (.5, 1.2) [fill, scale=.75]{};
	\vertex (9) at (.8, 2.1) [fill, scale=.75]{};
	\vertex (10) at (-.35, 1.55) [fill, scale=.75]{};
	\vertex (11) at (3, .75) [fill, scale=.75]{};
	\vertex (12) at (3.4, 1.9) [fill, scale=.75]{};
	\vertex (13) at (3.1, 2.75) [fill, scale=.75]{};
	\vertex (14) at (4.25, 2.35) [fill, scale=.75]{};
	\path
		(1) edge (3)
		(1) edge[bend left=50] (3)
		(1) edge[bend right=50] (3)
		(4) edge (5)
		(5) edge (6)
		(6) edge[bend right=35] (7)
		(6) edge (7)
		(5) edge (7)
		(2) edge (8)
		(8) edge (9) 
		(8) edge (10)
		(9) edge (10)
		(9) edge [bend right=35] (10)
		(11) edge (12)
		(12) edge (13)
		(12) edge (14)
		(13) edge (14)
		(13) edge[bend left=35] (14)

	;
\end{tikzpicture}
\end{center}
\caption{The graph $G_3$}
\end{figure}
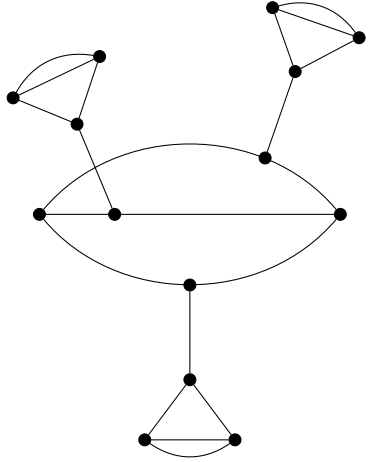

\begin{theorem} \label{ZZZ} If $G$ is a connected cubic graph such that each component of $G - B(G)$ has either precisely one vertex of degree 2 or an even number of vertices of degree 2, then $G$ is zero-sum $\mathbb{Z}_2^k \times \mathbb{Z}_4$-magic for $k \geq 3$.
\end{theorem}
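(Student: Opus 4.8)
The plan is to separate the single $\mathbb{Z}_4$-coordinate from the three $\mathbb{Z}_2$-coordinates and to build the labeling component by component. First dispose of the bridgeless case: if $B(G)=\emptyset$ then $G$ is a $2$-edge-connected cubic graph, so by Theorem~\ref{JJJ} it is zero-sum $\mathbb{Z}_2^m$-magic for some $m\le 3$; padding the labels with $0$'s makes $G$ zero-sum $\mathbb{Z}_2^k$-magic, hence zero-sum $\mathbb{Z}_2^k\times\mathbb{Z}_4$-magic, for every $k\ge 3$. So assume $B(G)\ne\emptyset$. Then every component $H_i$ of $G-B(G)$ is $2$-edge-connected with $\Delta(H_i)\le 3$; in such a component a vertex has degree $2$ precisely when it meets exactly one bridge of $G$, and the number of degree-$2$ vertices of $H_i$ equals the number of bridges meeting $H_i$. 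By hypothesis each $H_i$ has either one degree-$2$ vertex ($h_i$ a leaf of $T_G$) or an even number of them; I shall also take it that no $H_i$ is trivial (a trivial component has no degree-$2$ vertex, but by Theorem~\ref{WWW} such a $G$ is never zero-sum $\mathbb{Z}_2^k\times\mathbb{Z}_4$-magic, so the hypothesis must be understood with ``a positive even number''). By Theorem~\ref{YYY}, in any zero-sum $\mathbb{Z}_2^3\times\mathbb{Z}_4$-magic labeling each bridge must be assigned $(0,0,0,2)$ (the first three coordinates are forced to $0$, the fourth lies in $\{0,2\}$ and is nonzero). Hence it suffices to label the edges of each $H_i$ with nonzero elements of $\mathbb{Z}_2^3\times\mathbb{Z}_4$ so that (i) every degree-$3$ vertex of $H_i$ has weight $0$ and (ii) at every degree-$2$ vertex the two incident edges of $H_i$ sum to $(0,0,0,2)$; assigning $(0,0,0,2)$ to every bridge then yields a zero-sum $\mathbb{Z}_2^3\times\mathbb{Z}_4$-magic labeling of $G$, and padding (or Lemma~\ref{LEMMA}) extends this to all $k\ge 3$.

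For a fixed component $H_i$ I would take $\phi(e)=(\chi_i(e),\psi_i(e))\in\mathbb{Z}_2^3\times\mathbb{Z}_4$. For $\chi_i$, use a zero-sum $\mathbb{Z}_2^3$-magic labeling of $H_i$, which exists since $H_i$ is $2$-edge-connected (Theorem~\ref{JJJ}, after padding); then $\chi_i(e)\ne 0$ for every $e$, so each $\phi(e)$ is nonzero, and the $\mathbb{Z}_2^3$-part of the weight of every vertex of $H_i$ is $0$. It remains to choose $\psi_i\colon E(H_i)\to\mathbb{Z}_4$ giving $\mathbb{Z}_4$-weight $0$ at each degree-$3$ vertex and $2$ at each degree-$2$ vertex, since then at a degree-$2$ vertex the two incident edges of $H_i$ sum to $(0,0,0,2)$, exactly cancelling the incident bridge. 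If $H_i$ has an even number $2t\ge 2$ of degree-$2$ vertices, apply Lemma~\ref{QQQ} (directly, when $H_i$ is a cycle) to get a threading of $H_i$ by $t$ vertex-disjoint nontrivial paths $P_1,\dots,P_t$ with terminal set the degree-$2$ vertices, and set $\psi_i(e)=2$ on the edges of the $P_\ell$ and $\psi_i(e)=0$ on all other edges: a degree-$2$ vertex is an endpoint of exactly one $P_\ell$, so meets exactly one edge of weight $2$, whereas a degree-$3$ vertex is interior to at most one $P_\ell$ and so meets either two or zero edges of weight $2$. If instead $H_i$ has a unique degree-$2$ vertex $v$, whose neighbors $v',v''$ are necessarily distinct, smooth $v$ to obtain the $2$-edge-connected cubic graph $s(H_i)$ with a new edge $e^{\ast}=v'v''$, use Theorem~\ref{MLK}(b) to choose a $2$-factor $M$ of $s(H_i)$ containing $e^{\ast}$, and put $\psi_i(e)=1$ on the edges of $M$ (including the two edges $vv',vv''$ arising from $e^{\ast}$) and $\psi_i(e)=2$ on the remaining edges, which form a perfect matching; then every vertex of $s(H_i)$ meets two edges of $M$ and one edge outside $M$, so each degree-$3$ vertex of $H_i$ gets $\mathbb{Z}_4$-weight $1+1+2=0$ and $v$ gets $\mathbb{Z}_4$-weight $1+1=2$, as required.

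I expect the main obstacle to be the unavoidable asymmetry between the two component types. A component with a single degree-$2$ vertex has an odd number of attachment points, so its $\mathbb{Z}_4$-coordinate cannot be handled by a $\{0,2\}$-valued threading as in the even case; one is forced into the $2$-factor/perfect-matching construction on the smoothed graph, and the bookkeeping — in particular, arranging $e^{\ast}$ to lie in the $2$-factor (not the matching) so that splitting it produces two label-$1$ edges and makes $v$'s weight exactly $2$, and also tracking how the split of $e^{\ast}$ affects the weights of the degree-$3$ neighbors of $v$ — is the delicate step. Everything else is routine once this product decomposition is in place: nonzero-ness of all labels follows from $\chi_i$ being proper, the weight-zero condition at degree-$3$ vertices and the bridge-cancellation at degree-$2$ vertices are immediate from the definitions of $\chi_i$ and $\psi_i$, and the bridgeless base case and the extension to $k>3$ are both trivial.
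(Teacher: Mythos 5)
Your proof is correct, and its skeleton is the same as the paper's: bridges get $(0,0,0,2)$ as forced by Theorem~\ref{YYY}, the three $\mathbb{Z}_2$-coordinates are supplied per component by Theorem~\ref{JJJ} (which also guarantees every label is nonzero), and components with an even number of degree-$2$ vertices get their $\mathbb{Z}_4$-coordinate from a $\{0,2\}$-valued threading via Lemma~\ref{QQQ}. The one place you diverge is the component with a unique degree-$2$ vertex $v$: the paper observes that such a component is non-bipartite, picks an odd cycle $C_i$ and a shortest path $P_i$ from $v$ to it, and labels $P_i$ with $2$ and $C_i$ alternately with $1$ and $3$; you instead smooth $v$, take a $2$-factor of $s(H_i)$ through the new edge $v'v''$ (Theorem~\ref{MLK}(b)), and label the $2$-factor with $1$ and the complementary perfect matching with $2$, so that $v$ receives weight $1+1=2$ and every degree-$3$ vertex receives $1+1+2=0$. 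Your variant is the same device the paper uses in Theorems~\ref{PPD} and~\ref{PPP}, and it has the small advantage of not needing the non-bipartiteness of $H_i$ (which the paper asserts without proof, though it does follow from a degree-sum/parity count). Your observation that the hypothesis must exclude trivial components (since $0$ is even but Theorem~\ref{WWW} rules those graphs out) is also a correct and worthwhile reading of the statement.
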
 

\begin{proof}  We construct a zero-sum $\mathbb{Z}_2^3 \times \mathbb{Z}_4$-magic labeling $\phi$ of $G$ whose labels represent the  factor $\mathbb{Z}_4$ in the $4^{th}$ coordinate.

Let the components of $G-B(G)$ be denoted $H_0, H_1, H_2, ..., H_q, H_{q+1}, H_{q+2}, ...., H_b$, where  those components having precisely one vertex of degree 2 are $H_j$, $0 \leq j \leq q$. Because each component $H_i$ is non-trivial and 2-edge-connected, we may find a zero-sum $\mathbb{Z}_2^3$-magic labeling $\phi_i$ of $H_i$ by Theorem \ref{JJJ}.

We establish the first three coordinates of the labels under $\phi$ as follows: for each edge $e$ in $E(G)$,   the first three coordinates of $\phi(e)$ shall respectively agree with the first three coordinates of $\phi_i(e)$ if $e \in E(H_i)$; otherwise, if $e$ is a bridge of $G$,  the first three coordinates of $\phi(e)$ shall each be 0.

To establish the 4$^{th}$ coordinate of each label under $\phi$,  we separately consider bridges, edges in $H_i$ for $0 \leq i \leq q$, and edges in $H_i$, $q+1 \leq i \leq b$.

For each bridge $e$, $\phi(e)$ shall be 2 in the $4^{th}$ coordinate.

For each  $i$,  $0 \leq i \leq q$, we let $h_i$ denote the unique vertex of degree 2 in $H_i$.  Since each such  $H_i$  is non-bipartite and thus contains an odd cycle $C_i$, we may find a shortest path $P_i$ from $h_i$ to $C_i$ if $h_i$ is not incident to $C_i$.  For each edge $e \in E(H_i)$ we define the $4^{th}$ coordinate of $\phi(e)$ to be

\vskip5pt
2 if $e$ is along $P_i$;

 the appropriate label 1 or 3 if $e$ is along  $C_i$;

0 otherwise.
\vskip5pt

For each $i$, $q+1 \leq i \leq b$, we apply Lemma \ref{QQQ}  to $H_i$, letting $Th(i)$ be a threading of $H_i$. For each edge $e \in E(H_i)$ we define the $4^{th}$ coordinate of $\phi(e)$ to be
\vskip 5pt
2 if $e$ is along a path in $Th(i)$;

 0 otherwise
\vskip 5pt

It is easy to check that $\phi$ is a zero-sum $\mathbb{Z}_2^3 \times \mathbb{Z}_4$-magic labeling of $G$. 
\end{proof}

In Figure 6, we present an example of a graph $G_4$ with no 1-factor that satisfies the conditions of Theorem \ref{ZZZ}. Thus $G_4$ is zero-sum $\mathbb{Z}_2^3  \times \mathbb{Z}_4$-magic.   However,  it is easily checked (by construction) that $G_4$ is zero-sum $\mathbb{Z}_2 \times \mathbb{Z}_4$-magic as well.  So by Theorems \ref{AAA}, \ref{XXX}, and \ref{PPP},  $G_4$ is zero-sum $ \mathbb{A}$-magic for precisely all abelian groups $\mathbb{A}$ except $\mathbb{Z}_4$ and $\mathbb{Z}_2^k$ for $k \geq 1$.

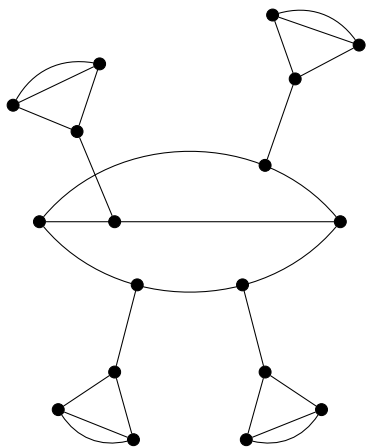
\begin{figure}[h]
\begin{center}
\begin{tikzpicture}[]
\tikzstyle{vertex}=[circle, draw, inner sep=0pt, minimum size=6pt]
\tikzset{vertexStyle/.append style={rectangle}}
	\vertex (1) at (0,0) [fill, scale=.75] {};
	\vertex (2) at (1,0) [fill, scale=.75] {};
	\vertex (3) at (4,0) [fill, scale=.75]{};
	
	\vertex (4) at (1.3,-.84) [fill, scale=.75]{};
	\vertex (5) at (2.7, -.84) [fill, scale=.75]{};
	\vertex (6) at (1, -2) [fill, scale=.75]{};
	\vertex (7) at (.25, -2.5) [fill, scale=.75]{};
	\vertex (15) at (1.25, -2.9) [fill, scale=.75]{};
	\vertex (16) at (3, -2) [fill, scale=.75]{};
	\vertex (17) at (3.75, -2.5) [fill, scale=.75]{};
	\vertex (18) at (2.75, -2.9) [fill, scale=.75]{};
	
	\vertex (8) at (.5, 1.2) [fill, scale=.75]{};
	\vertex (9) at (.8, 2.1) [fill, scale=.75]{};
	\vertex (10) at (-.35, 1.55) [fill, scale=.75]{};
	\vertex (11) at (3, .75) [fill, scale=.75]{};
	\vertex (12) at (3.4, 1.9) [fill, scale=.75]{};
	\vertex (13) at (3.1, 2.75) [fill, scale=.75]{};
	\vertex (14) at (4.25, 2.35) [fill, scale=.75]{};
	\path
		(1) edge (3)
		(1) edge[bend left=50] (3)
		(1) edge[bend right=50] (3)
		(4) edge (6)
		(6) edge (7)
		(6) edge (15)
		(7) edge (15)
		(17) edge (18)
		(17) edge[bend left=35] (18)
		(17) edge (16)
		(18) edge (16)
		(7) edge[bend right=35] (15)
		(5) edge (16)
		(2) edge (8)
		(8) edge (9) 
		(8) edge (10)
		(9) edge (10)
		(9) edge [bend right=35] (10)
		(11) edge (12)
		(12) edge (13)
		(12) edge (14)
		(13) edge (14)
		(13) edge[bend left=35] (14)

	;
\end{tikzpicture}
\end{center}
\caption{The graph $G_4$.}
\end{figure}

Obviously there are many cubic graphs that satisfy the hypotheses  of  neither of the two preceding theorems. Several questions arise. 

Does there exist a connected cubic graph $G$ that is  zero-sum $\mathbb{Z}_2^k \times \mathbb{Z}_4$-magic for some $k$ such that each component of $G - B(G)$ has an odd number of vertices of degree 2? Observing that each component of $G-B(G)$ is necessarily not bipartite (Theorem \ref{WWW}),   we note that  the graph  in Figure 2  has a 1-factor, and hence is zero-sum $\mathbb{Z}_4$-magic by Theorem \ref{AAA}. Thus, it is zero-sum $\mathbb{Z}_2^k \times \mathbb{Z}_4$-magic for all $k \geq 1$.

\noindent Accordingly, we next ask if there exists a cubic graph $G$ with no 1-factor  such that $G$ is zero-sum $\mathbb{Z}_2^k \times \mathbb{Z}_4$-magic for some $k$ and each component of $G - B(G)$ has an odd number of vertices of degree 2. We present an affirmative response in Figure 7, whose graph $G_5$ is  zero-sum $\mathbb{Z}_2 \times \mathbb{Z}_4$-magic by construction, and hence  zero-sum $\mathbb{Z}_2^k \times \mathbb{Z}_4$-magic for $k \geq 1$. 

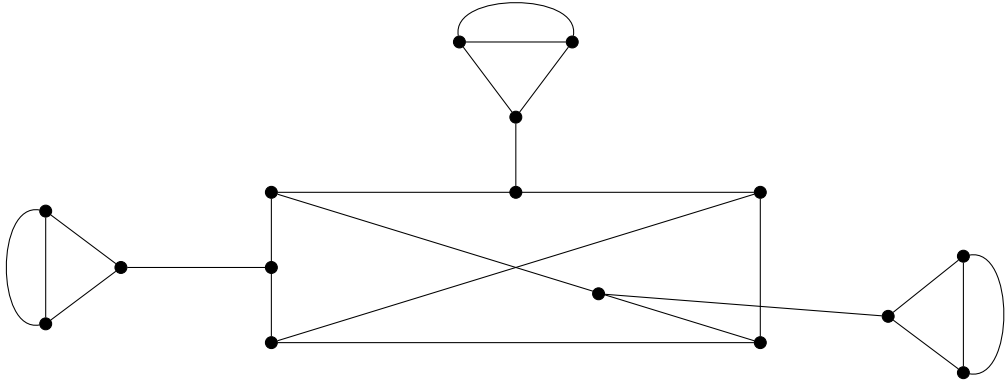
\begin{figure}[h]
\begin{center}
\begin{tikzpicture}[]
\tikzstyle{vertex}=[circle, draw, inner sep=0pt, minimum size=6pt]
\tikzset{vertexStyle/.append style={rectangle}}
	\vertex (1) at (-2,0) [fill, scale=.75] {};
	\vertex (2) at (4.5,0) [fill, scale=.75] {};
	\vertex (3) at (-2,2) [fill, scale=.75]{};
	\vertex (4) at (4.5,2) [fill, scale=.75]{};
	\vertex (5) at (1.25, 2) [fill, scale=.75]{};
	\vertex (6) at (1.25, 3) [fill, scale=.75]{};
	\vertex (7) at (.5, 4) [fill, scale=.75]{};
	\vertex (8) at (2, 4) [fill, scale=.75]{};
	\vertex (9) at (-2, 1) [fill, scale=.75]{};
	\vertex (10) at (-4, 1) [fill, scale=.75]{};
	\vertex (11) at (-5, 1.75) [fill, scale=.75]{};
	\vertex (12) at (-5, .25) [fill, scale=.75]{};
	\vertex (13) at (2.35, .65) [fill, scale=.75]{};
	\vertex (14) at (6.2, .35) [fill, scale=.75]{};
	\vertex (15) at (7.2, 1.15) [fill, scale=.75]{};
	\vertex (16) at (7.2, -.4) [fill, scale=.75]{};
	\path
		(1) edge (2)
		(1) edge (3)
		(2) edge (3)
		(3) edge (4)
		(2) edge (4)
		(1) edge (4)
		(5) edge (6)
		(6) edge (7)
		(6) edge (8)
		(7) edge (8) (8)
                 (7) edge[bend left=100] (8)
		(9) edge (10)
		(10) edge (11)
		(10) edge (12)
		(11) edge (12)
		(11) edge[bend right=100] (12)
		(13) edge (14) 
		(14) edge (15)
		(14) edge (16)
		(15) edge (16)
		(15) edge[bend left=100] (16);


	;
\end{tikzpicture}
\end{center}
\caption{The graph $G_5$.}
\end{figure}

Thus far, we have presented no graph that is zero-sum $\mathbb{Z}_2^3 \times \mathbb{Z}_4$-magic but not zero-sum $\mathbb{Z}_2 \times \mathbb{Z}_4$-magic. So we ask if every graph $G$ that is  zero-sum $\mathbb{Z}_2^3 \times \mathbb{Z}_4$-magic is necessarily  zero-sum $\mathbb{Z}_2 \times \mathbb{Z}_4$-magic. As we shall see, the answer is no. To facilitate our discussion, we present the following theorem.

\begin{theorem} \label{TTTX} Let $G$ be a connected cubic graph such that $H$ is a component of $G - B(G)$ with the following properties:

\vskip 5pt

(a) $\Delta(H) = 3$, and

(b)  no two vertices of degree 3 in $H$ are adjacent.
\vskip 5pt

\noindent Then $G$ is not zero-sum $\mathbb{Z}_2 \times \mathbb{Z}_4$-magic if the chromatic index of $s(H)$ is 4.
\end{theorem}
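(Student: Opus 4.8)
The plan is to argue by contradiction: suppose $G$ admits a zero-sum $\mathbb{Z}_2 \times \mathbb{Z}_4$-magic labeling $\phi$, and manufacture from it a zero-sum $\mathbb{Z}_2^2$-magic labeling of $s(H)$; then Theorem \ref{BBB} forces the chromatic index of $s(H)$ to be $3$, contradicting the hypothesis. Before starting, I would record two preliminary facts. First, hypothesis (a) makes $H$ a Type III component of $G - B(G)$, so $s(H)$ is a (2-edge-connected) cubic graph and Theorem \ref{BBB} genuinely applies to it. Second, Theorem \ref{YYY}, applied with $m = 2$, $f(1) = 1$, and $f(2) = 2$, shows that every bridge of $G$ receives a label whose first coordinate is $0$ and whose second coordinate lies in $\{0,2\}$; since $(0,0)$ is not an admissible edge label, every bridge of $G$ is labeled $(0,2)$.

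Next I would analyze the \emph{threads} of $H$ --- the maximal paths of $H$ whose internal vertices all have degree $2$ --- observing that each thread is precisely an edge of $s(H)$, and that hypothesis (b) guarantees every thread has at least one internal (degree-$2$) vertex and hence at least two edges of $H$. Fix a thread $w_0 v_1 v_2 \cdots v_\ell w_1$ with $\ell \ge 1$, the $w$'s being degree-$3$ vertices of $H$ and the $v_i$'s degree-$2$. Each $v_i$ is incident to exactly one bridge $b_i$ of $G$, with $\phi(b_i) = (0,2)$, so the zero-sum condition at $v_i$ forces the two thread-edges at $v_i$ to sum to $-(0,2) = (0,2)$. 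Writing $\gamma_1, \dots, \gamma_{\ell+1}$ for the successive labels along the thread, this says $\gamma_{t+1} = (0,2) - \gamma_t$, so the labels alternate between $\gamma_1$ and $(0,2) - \gamma_1$. Since at least two edges are present, both values occur as genuine nonzero labels, and combined with $\gamma_1 \ne (0,0)$ this forces $\gamma_1 \notin \{(0,0),(0,2)\}$.

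I would then let $h : \mathbb{Z}_2 \times \mathbb{Z}_4 \to \mathbb{Z}_2^2$ be the homomorphism $(a,b) \mapsto (a,\, b \bmod 2)$, whose kernel is exactly $\{(0,0),(0,2)\}$. Since $h\big((0,2) - \gamma_1\big) = -h(\gamma_1) = h(\gamma_1)$, all edges of a single thread share one $h$-image, so one may define $\psi(f) = h(\phi(e))$ for any $H$-edge $e$ on the thread $f$ of $s(H)$; the previous paragraph gives $\psi(f) \ne (0,0)$. Finally, a vertex $w$ of $s(H)$ is a degree-$3$ vertex of $H$, and no bridge of $G$ can be incident to it (that would reduce its degree in $H$), so the three $G$-edges at $w$ all lie in $H$ and are the initial edges of the three threads at $w$; applying $h$ to $\sum_{e \ni w}\phi(e) = (0,0)$ yields $\sum_{f \ni w}\psi(f) = (0,0)$. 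Thus $\psi$ is a zero-sum $\mathbb{Z}_2^2$-magic labeling of the cubic graph $s(H)$, so by Theorem \ref{BBB} the chromatic index of $s(H)$ is $3$, contradicting the hypothesis.

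The step I expect to be the main obstacle is the non-vanishing $\psi(f) \ne (0,0)$: this is exactly the point at which hypothesis (b) is indispensable, since without it a thread could be a single edge joining two degree-$3$ vertices, carrying a label (possibly $(0,2)$) that lies in $\ker h$, whereupon the construction fails. The remaining items are routine once the Type III structure of $H$ is in hand: that $s(H)$ is cubic and loopless (a thread cannot return to its starting vertex, as that would create a bridge inside the $2$-edge-connected $H$), that the bijection between threads of $H$ and edges of $s(H)$ is well-behaved even when $s(H)$ has multiple edges, and the bookkeeping of coordinate sums.
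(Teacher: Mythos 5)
Your proof is correct, and it reaches the conclusion by a somewhat different route than the paper. Both arguments share the same combinatorial core: Theorem \ref{YYY} forces every bridge to carry $(0,2)$, so along each maximal path of $H$ through degree-$2$ vertices the labels alternate as $\gamma \mapsto (0,2)-\gamma$, and hypothesis (b) (each such path has an internal vertex, hence at least two edges) rules out $\gamma \in \{(0,0),(0,2)\}$. Where you diverge is in how this is converted into a statement about $\chi'\big(s(H)\big)$. You factor the labeling through the quotient homomorphism $h:\mathbb{Z}_2\times\mathbb{Z}_4 \to \mathbb{Z}_2^2$ with kernel $\{(0,0),(0,2)\}$, observe that $h\circ\phi$ is constant and nonzero on each thread, and thereby obtain a zero-sum $\mathbb{Z}_2^2$-magic labeling of $s(H)$, at which point Theorem \ref{BBB} finishes the job. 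The paper instead carries out the parity analysis at each degree-$3$ vertex by hand, builds the subgraphs $Q$ (edges with even second coordinate) and $R$ (edges with first coordinate $1$), shows $Q\subseteq R$, and reads off an explicit $3$-edge-coloring of $s(H)$ from the induced $1$-factor and $2$-factor; in effect it re-derives inline the direction of Theorem \ref{BBB} that it needs. Your version is shorter and more conceptual at the cost of leaning on Theorem \ref{BBB} as a black box; the paper's version is self-contained and exhibits the coloring explicitly. Your side remarks that make the reduction airtight --- that a thread cannot return to its starting vertex (else its third edge at that vertex would be a bridge of the $2$-edge-connected $H$), and that the non-vanishing of $\psi$ is precisely where (b) is used --- are accurate and worth keeping.
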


\begin{proof} We show that if $G$ is zero-sum $\mathbb{Z}_2 \times \mathbb{Z}_4$-magic, then $\chi'\big(s(H)\big) = 3$.

Note that $G$ has a non-empty bridge set; otherwise, $G = H$, implying a violation of condition (2). Moreover, each vertex of degree 3 of $H$ is incident to 3 distinct vertices of degree 2 of $H$, and each vertex of degree 2 in $H$ is incident to a bridge of $G$.

Let $\phi$ be a zero-sum $\mathbb{Z}_2 \times \mathbb{Z}_4$-magic labeling of $G$ under which each bridge of $G$ is necessarily assigned $(0,2)$ by Theorem \ref{YYY}. Let $W$ denote the set of vertices of degree 3 in $H$ and let $w$ be an arbitrary vertex in $W$. Then there are precisely three distinct vertices $x_1$, $x_2,$ and $x_3$ of degree 2 in $H$ that are adjacent to $w$ in $H$, and for $1 \leq i \leq 3, 1 \leq j \leq 2$, we may let $e_{i,j}$ be the edges in $H$ to which $x_i$ is incident, where $e_{i,1}$ is incident to $w$.

Since the weight of $w$ under $\phi$ is 0, we observe that the number of distinct $i$ such that $\phi(e_{i,1})$ is 1 in the first coordinate is even, and the number of distinct $i$ such that $\phi(e_{i,1})$ is 1 or 3 in the second coordinate is even; either 0 or 2. Suppose the latter is 0. Then all three edges $e_{i,1}$ are assigned either 0 or 2 in the second coordinate by $\phi$. If the second coordinate of $\phi(e_{i,1})$ is 0, then the first coordinate must be 1. And if the second coordinate of $\phi(e_{i,1})$ is 2, then (since the bridge incident to $x_i$ must be labeled $(0,2)$) $\phi(e_{i,2})$ must have 0 in the second coordinate and therefore 1 in the first coordinate, implying that $\phi(e_{i,1})$ is 1 in the first coordinate. Hence, if the number of distinct $i$ such that $\phi(e_{i,1})$ is 1 or 3 in the second coordinate is 0, we have the contradiction that $\phi(e_{i,1})$ is 1 in the first coordinate for each $i, 1 \leq i \leq 3$. Thus, the number of distinct $i$ such that $\phi(e_{i,1})$ is 1 or 3 in the second coordinate is 2, or alternatively, the number of distinct $i$ such that $\phi(e_{i,1})$ is even in the second coordinate is 1. Supposing that $e_{i_0,1}$ is the particular edge such that $\phi(e_{i_0,1})$
is even in its second coordinate, we have, by the above argument, that $\phi(e_{i_0,1})$ is 1 in its first coordinate. This implies that precisely two edges incident to $w$ have labels under $\phi$ with 1 in the first coordinate.

Observing that, in $H$, every vertex with degree 2 is incident to two edges whose second coordinates have necessarily equal parity, we let $Q$ be the subgraph of $H$ induced by the edges of $H$ that, under $\phi$, receive a label with an even second coordinate. It can be easily checked that $Q$ consists of ${\vert W \vert \over 2}$ components, each of which is a path whose terminal vertices are in $W$. Thus $Q$ induces a 1-factor $Q_{s(H)}$ in $s(H)$.

Observing that, in $H$, every vertex with degree 2 is incident to two edges whose first coordinates have necessarily equal parity, we let $R$ be the subgraph of $H$ induced by the edges of $H$ that, under $\phi$, receive a label with 1 in the first coordinate. Then $R$ is a 2-regular subgraph of $H$, and as well, $R$ has $Q$ as a subgraph. (Otherwise, $\phi$ assigns $(0,0)$ to some edge $e$.) Moreover, $R$ induces a 2-factor $R_{s(H)}$ in $s(H)$ which contains $Q_{s(H)}$.

We note that the following are pairwise disjoint: $E(Q_{s(H)})$, $E(R_{s(H)}) - E(Q_{s(H)})$ and $E\big(s(H)\big) - E(R_{s(H)})$. Since these induce a 3-edge coloring of $s(H)$, $\chi'\big(s(H)\big) = 3$. \end{proof}

Let ${\cal M}_1$ denote the infinite collection of graphs $M_1(G)$ that are formed by  the following construction:  for  2-edge-connected cubic graph $G$, let  $G_1$ be the 1-subdivision  of $G.$ Then  $M_1(G)$ is  the graph that results by  identifying each subdividing vertex  with the  vertex of degree 1 of the  martini glass graph.  We note that $M_1(G)$ is a cubic graph having order  $7\vert V(G) \vert$, size $7\vert E(G)\vert$,  $\vert E(G) \vert$ bridges, and no 1-factor.    Thus  $M_1(G)$ is neither     zero-sum $\mathbb{Z}_4$-magic nor zero-sum $\mathbb{Z}_2^k$-magic for $k \geq 1$. We note as well that $G_1$ is a bipartite component of $M_1(G) - B\big(M_1(G)\big)$ with $\vert E(G) \vert$ vertices of degree 2.   Moreover, if $\vert V(G) \vert \equiv 2$ mod $4$,  then
$\vert E(G) \vert$ is odd, implying (by Theorem \ref{WWW}) that  for each $k \geq 1$, $M_1(G)$ is not zero-sum $\mathbb{Z}_2^k \times \mathbb{Z}_4$-magic.  Theorem \ref{PPP} then implies that   if $\vert V(G) \vert \equiv 2$ mod $4$,  $M_1(G)$ is zero-sum $\mathbb{A}$-magic for every finite non-trivial abelian group $\mathbb{A}$ except  $\mathbb{Z}_4,$ $\mathbb{Z}_2^k$ for $k \geq 1$, and   $\mathbb{Z}_2^k \times \mathbb{Z}_4$ for $k \geq 1$.   On the other hand, if $\vert V(G) \vert \equiv 0$ mod 4, it follows that $\vert E(G)\vert$ is even. Thus by Theorem \ref{ZZZ}, $M_1(G)$ is zero-sum $\mathbb{Z}_2^3 \times \mathbb{Z}_4$-magic, leaving only the 
zero-sum $\mathbb{Z}_2^2 \times \mathbb{Z}_4$-magicness and zero-sum $\mathbb{Z}_2 \times \mathbb{Z}_4$-magicness of $M_1(G)$ open to question. 

We are now ready to provide an example of a cubic graph that is  zero-sum $\mathbb{Z}_2^3 \times \mathbb{Z}_4$-magic but  not zero-sum $\mathbb{Z}_2 \times \mathbb{Z}_4$-magic. Let $Pe^*$ denote the graph displayed in Figure 8. By Theorem \ref{ZZZ}, $M_1(Pe^*) $ is  zero-sum $\mathbb{Z}_2^3 \times \mathbb{Z}_4$-magic. But $M_1(Pe^*) - B\big(M_1(Pe^*)\big)$ has a 2-edge-connected component $H$ such that $s(H)$ is $Pe^*$, which has   chromatic index 4. So by Theorem \ref{TTTX}, $M_1(Pe^*)$ is not zero-sum  $\mathbb{Z}_2 \times \mathbb{Z}_4$-magic.  We leave it to the reader to establish that $M_1(Pe^*)$ is zero-sum $\mathbb{Z}_2^2 \times \mathbb{Z}_4$-magic. Noting that the order of $M_1(Pe^*)$ is 84, we believe that $M_1(Pe^*)$ is the graph of smallest order in ${\cal M}_1$ that is zero-sum $\mathbb{Z}_2^3 \times \mathbb{Z}_4$-magic but not zero-sum $\mathbb{Z}_2 \times \mathbb{Z}_4$-magic. 
Furthermore, by using a construction similar to that used for $M_1(Pe^*)$, it can be established that there are infinitely many members of ${\cal M}_1$ that are zero-sum $\mathbb{Z}_2^3 \times \mathbb{Z}_4$-magic but not zero-sum $\mathbb{Z}_2 \times \mathbb{Z}_4$-magic.

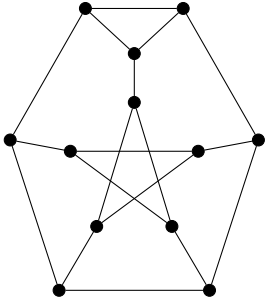
\begin{figure}[h]
\begin{center}
\begin{tikzpicture}[]
\tikzstyle{vertex}=[circle, draw, inner sep=0pt, minimum size=6pt]
\tikzset{vertexStyle/.append style={rectangle}}
	\vertex (1) at (0,0) [fill, scale=.75] {};
	\vertex (2) at (2,0) [fill, scale=.75] {};
	\vertex (3) at (-.65,2) [fill, scale=.75]{};
	\vertex (4) at (2.65,2) [fill, scale=.75]{};
	\vertex (5) at (1, 3.15) [fill, scale=.75]{};
	\vertex (6) at (1, 2.5) [fill, scale=.75]{};
	\vertex (7) at (.15,1.85) [fill, scale=.75]{};
	\vertex (8) at (1.85, 1.85)[fill, scale=.75]{};
	\vertex (9) at (.5, .85) [fill, scale=.75]{};
	\vertex (10) at (1.5, .85) [fill, scale=.75]{};
	\vertex (11) at (.35, 3.75) [fill, scale=.75]{};
	\vertex (12) at (1.65, 3.75) [fill, scale=.75]{};
	\path
		(1) edge (2)
		(2) edge (4)
		(1) edge (3)
		(5) edge (6)
		(6) edge (9)
		(6) edge (10)
		(8) edge (9)
		(8) edge (7)
		(7) edge (10)
		(3) edge (7)
		(1) edge (9)
		(2) edge (10)
		(4) edge (8)
		(11) edge (5)
		(11) edge (3)
		(11) edge (12)
		(12) edge (4)
		(12) edge (5)

	;
\end{tikzpicture}
\end{center}
\caption{The graph $Pe^*$}
\end{figure}

 Now let ${\cal M}_2$ denote the infinite collection of graphs $M_2(G)$ that are formed  by the following construction:  for  2-edge-connected cubic graph $G$, let  $G_2$ be the 2-subdivision of $G$. Then  $M_2(G)$ is   the graph that results by  identifying each subdividing vertex with the  vertex of degree 1 of the martini glass graph. We note that  $M_2(G)$ is a cubic graph with  order $13\vert V(G) \vert$, $2\vert E(G) \vert$ bridges, and no 1-factor.  Hence $M_2(G)$ is neither   zero-sum $\mathbb{Z}_4$-magic nor zero-sum $\mathbb{Z}_2^k$-magic for $k \geq 1$. We also observe that $G_2$,  a component of $M_2(G) - B\big(M_2(G)\big)$,  is bipartite if and only if $G$ is bipartite.  Since $G_2$ has an even number of vertices of degree 2 (numbering $2\vert E(G) \vert$), then by Theorem \ref{ZZZ}, $M_2(G)$ is zero-sum $\mathbb{Z}_2^3 \times \mathbb{Z}_4$-magic. The following theorem addresses the zero-sum $\mathbb{Z}_2 \times \mathbb{Z}_4$-magicness of $M_2(G)$. 

\begin{theorem} \label{FFF}
Let $G$ be a 2-edge-connected cubic graph.  Then $M_2(G)$ is zero-sum $\mathbb{Z}_2 \times \mathbb{Z}_4$-magic if and only if $\chi'(G) = 3$. 
\end{theorem}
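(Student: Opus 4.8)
The plan is to first make the structure of $M_2(G)$ completely explicit, then settle the forward implication by a single appeal to Theorem \ref{TTTX}, and settle the reverse implication by an explicit labeling built from a proper $3$-edge-coloring of $G$.

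\emph{Structure.} For $e = uv \in E(G)$ I would write $u - a_e - b_e - v$ for the path that replaces $e$ in the $2$-subdivision $G_2$ of $G$, and for each subdividing vertex $w$ of $G_2$ let $Q_w$ be the bowl (the martini graph of Figure 1 with its degree-$1$ vertex deleted) glued to $w$; thus $Q_w$ is a triangle with one doubled edge, and its unique vertex $w^*$ meeting only two of its edges is joined to $w$ by a single edge $w w^*$. Since $G$ is $2$-edge-connected, $G_2$ is $2$-edge-connected, so every edge of $G_2$ lies on a cycle of $M_2(G)$, and every edge of each $Q_w$ plainly does too. Hence $B\big(M_2(G)\big) = \{w w^* : w \text{ a subdividing vertex of } G_2\}$, of size $2|E(G)|$; the components of $M_2(G) - B\big(M_2(G)\big)$ are $G_2$ and the $2|E(G)|$ bowls $Q_w$; and $s(G_2) = G$.

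\emph{Forward implication.} Suppose $M_2(G)$ is zero-sum $\mathbb{Z}_2 \times \mathbb{Z}_4$-magic. The component $G_2$ has $\Delta(G_2) = 3$, and its degree-$3$ vertices are exactly the vertices of $G$, no two of which are adjacent in $G_2$ (each edge of $G$ was subdivided twice); so $G_2$ satisfies hypotheses (a) and (b) of Theorem \ref{TTTX}, with $s(G_2) = G$. The contrapositive of Theorem \ref{TTTX} then forces $\chi'(G) \neq 4$, hence $\chi'(G) = 3$, since a cubic graph has chromatic index $3$ or $4$. Alternatively one may argue directly: Theorem \ref{YYY} forces every bridge of $M_2(G)$ to carry $(0,2)$, which decouples each bowl from the rest and leaves exactly the requirement that the two outer edges of the path replacing each $e$ carry a common value $\lambda(e) \notin \{(0,0),(0,2)\}$ with $\sum_{e \ni v} \lambda(e) = (0,0)$ at every $v$; reducing the $\mathbb{Z}_4$-coordinate of $\lambda$ modulo $2$ yields a zero-sum $\mathbb{Z}_2^2$-magic labeling of $G$, so $\chi'(G) = 3$ by Theorem \ref{BBB}.

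\emph{Reverse implication.} Assume $\chi'(G) = 3$ and fix a proper $3$-edge-coloring $\kappa\colon E(G) \to \{1,2,3\}$. I would put $\lambda(e) = (0,1),(1,1),(1,2)$ according as $\kappa(e) = 1,2,3$ and define $\phi$ on $M_2(G)$ by $\phi(u a_e) = \phi(v b_e) = \lambda(e)$ and $\phi(a_e b_e) = (0,2) - \lambda(e)$ for each $e = uv$; $\phi(w w^*) = (0,2)$ on every bridge; and on each bowl $Q_w$, label $(0,1)$ the two edges meeting $w^*$ and one of the two parallel edges, and label $(0,2)$ the remaining parallel edge. All labels are nonzero, and one checks every weight is $(0,0)$: at $v \in V(G)$ it is $\lambda(e_1)+\lambda(e_2)+\lambda(e_3) = (0,1)+(1,1)+(1,2) = (0,0)$ (the $e_i$ getting the three colors); at $a_e$ and $b_e$ it is $\lambda(e) + \big((0,2)-\lambda(e)\big) + (0,2) = (0,0)$; and the weights inside each bowl are readily seen to be $(0,0)$. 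Thus $\phi$ is a zero-sum $\mathbb{Z}_2 \times \mathbb{Z}_4$-magic labeling of $M_2(G)$.

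The hard part will be the structural reduction rather than either implication on its own: one must pin down $B\big(M_2(G)\big)$ exactly, confirm that $G_2$ meets the hypotheses of Theorem \ref{TTTX} so it can be invoked verbatim, and --- for the alternative direct argument --- verify that fixing $(0,2)$ on every bridge genuinely splits the problem on $M_2(G)$ into an independent, trivially solvable problem on each bowl and the reduced labeling problem on $G$. Once that translation is in hand, the $3$-edge-coloring construction handles the converse and Theorem \ref{TTTX} (or Theorem \ref{BBB}) handles the forward direction.
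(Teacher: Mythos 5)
Your proposal is correct and follows essentially the same route as the paper: the forward implication is delegated to Theorem \ref{TTTX} applied to the component $G_2$ of $M_2(G)-B\big(M_2(G)\big)$ (with $s(G_2)=G$), and the reverse implication is the same explicit labeling --- your $\lambda$-values $(0,1),(1,1),(1,2)$ with middle edge $(0,2)-\lambda(e)$ reproduce the paper's path labels for its three color classes exactly, with only a trivial variation in how the four bowl edges are labeled. Your alternative direct argument for the forward direction (forcing $(0,2)$ on bridges via Theorem \ref{YYY} and reducing the induced edge-labeling of $G$ mod $2$ to invoke Theorem \ref{BBB}) is a nice self-contained shortcut, but the main line of reasoning coincides with the paper's.
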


\begin{proof}  By Theorem \ref{TTTX}, it suffices to show that if $\chi'(G) = 3$, then $M_2(G)$ is zero-sum $\mathbb{Z}_2 \times \mathbb{Z}_4$-magic.

Let $\{A, B, C\}$ be a partition of $E(G)$ whose elements represent color classes under some edge-coloring of $G$. For each edge  $e$ in $E(G)$, there exists a unique path $P(e)$ on 4 vertices in $G_2$ whose interior vertices are the subdividing vertices of edge $e$. We now construct a zero-sum $\mathbb{Z}_2 \times \mathbb{Z}_4$-magic labeling $\phi$ of $M_2(G)$, where the second coordinate of each label under $\phi$ represents the factor $\mathbb{Z}_4$. 

If $b$ is a bridge of $M_2(G)$, we let $\phi(b) = (0,2)$. 

If $e$ is an edge in color class $A$, then the labels of each edge along $P(e)$ under $\phi$ shall be $(1,1)$.

If $e$ is an edge in color class $B$, then the labels of each edge along $P(e)$ under $\phi$ shall be $(0,1)$.

If $e$ is an edge in color class $C$, then the labels of the edges along $P(e)$ under $\phi$ shall be $(1,2), (1,0), (1,2)$, respectively.

Finally, the four as yet unlabeled edges in each copy of the martini glass graph may be assigned $(1,1), (1,1), (1,1)$, and $(0,2)$ under $\phi$. 
\end{proof}

By Theorem \ref{FFF}, it follows that $M_2(G)$ is not zero-sum $\mathbb{Z}_2 \times \mathbb{Z}_4$-magic if and only if $\chi'(G) = 4$. Since the Petersen graph is the smallest cubic graph with chromatic index 4, $M_2(Pe)$ is the member of ${\cal M}_2$ of smallest order (130) that is not zero-sum $\mathbb{Z}_2 \times \mathbb{Z}_4$-magic. However, it is easily checked that the Petersen graph is zero-sum $\mathbb{Z}_2^2 \times \mathbb{Z}_4$-magic. And, since there exists a cubic graph with chromatic index 4 and order $2t$, $t \geq 5$, there is a member of ${\cal M}_2$ with order $26t$ which is not zero-sum $\mathbb{Z}_2 \times \mathbb{Z}_4$-magic.

We close this section with two questions.

\noindent 1. Is $M_1(Pe^*) $ the cubic graph of smallest order that  is zero-sum $\mathbb{Z}_2^3 \times \mathbb{Z}_4$-magic but not zero-sum $\mathbb{Z}_2 \times \mathbb{Z}_4$-magic? 

\noindent 2. Is there a cubic graph that is zero-sum  $\mathbb{Z}_2^3 \times \mathbb{Z}_4$-magic but not zero-sum $\mathbb{Z}_2^2 \times \mathbb{Z}_4$-magic?




\end{document}